\newtheorem{theorem}{Theorem}
\newtheorem{proposition}{Proposition}
\newcommand{\eq}[1]{\begin{equation}\label{#1}}
\newcommand{\en}{\end{equation}}
\newenvironment{proof}{\begin{trivlist}
                       \item[]{\bf Proof.}
                       \hspace{0cm} }{\hfill $\Box$
                       \end{trivlist}}
\def\IC{\mathbb{C}}
\def\inv{^{-1}}
\def\lt{\left}
\def\rt{\right}
\newcommand{\xij}{x_j^{(i)}}
\newcommand{\rij}{r_j^{(i)}}
\newcommand{\pij}{p_j^{(i-1)}}
\newcommand{\Xei}{X^{(i)}}
\journal{Journal of Computational Physics}
\begin{document}

\begin{frontmatter}

\title{A Projected Preconditioned Conjugate Gradient Algorithm for Computing Many Extreme Eigenpairs of a Hermitian Matrix\tnoteref{mytitlenote}}
\tnotetext[mytitlenote]{This material is based upon work supported by
the U.S. Department of Energy, Office of Science, under
Scientific Discovery through Advanced Computing (SciDAC) program funded
by the Offices of Advanced Scientific Computing Research and 
Basic Energy Sciences contract number DE-AC02-05CH11231.}

%
%
%
%

\author{Eugene Vecharynski\corref{mycorrespondingauthor}}
\cortext[mycorrespondingauthor]{Corresponding author}
\ead{eugene.vecharynski@gmail.com}
\author{Chao Yang}
\address{Computational Research Division, Lawrence Berkeley National Laboratory, 1 Cyclotron Road, Berkeley, CA 94720, USA}

\author{John E.~Pask}
\address{Physics Division, Lawrence Livermore National Laboratory, 7000 East Avenue, Livermore, CA 94550, USA}

%
%

\begin{abstract}
We present an iterative algorithm for computing an invariant subspace 
associated with the algebraically smallest eigenvalues of a large sparse or 
structured Hermitian matrix $A$.  We are interested in the case in which 
the dimension of the invariant subspace is large (e.g., over several hundreds 
or thousands) even though it may still be small relative to the 
dimension of $A$. 
These problems arise from, for example, density functional theory (DFT) based 
electronic structure calculations for complex materials.  The key feature 
of our algorithm is that it performs fewer Rayleigh--Ritz calculations
compared to existing algorithms such as the locally optimal block preconditioned
conjugate gradient or the Davidson algorithm. It is a block algorithm, and hence
can take advantage of efficient BLAS3 operations and be implemented with
multiple levels of concurrency. We discuss a number of practical issues that
must be addressed in order to implement the algorithm efficiently on a
high performance computer.
\end{abstract}


\end{frontmatter}


\section{Introduction}
We are interested in efficient algorithms for computing a small percentage
of eigenpairs of a large Hermitian matrix $A$ that is
either sparse or structured (i.e., the matrix--vector product $Ax$ 
can be computed efficiently.) Often these eigenpairs correspond to the algebraically smallest eigenvalues.
This type of problem arises, for example,
in the context of Kohn-Sham density functional theory (DFT) based
electronic structure calculations of large molecules or 
solids.  

When the dimension of the matrix $n$ is above $10^6$, for example,
even half a percent of $n$ amounts to more than 5,000  eigenvalues.
When that many eigenpairs are needed, many of the existing algorithms
such as the Lanczos algorithm~\cite{Saad-book3}, the block Davidson algorithm~\cite{Davidson:75, Saad-book3}, 
which is widely used in the electronic structure calculation community, 
and the Locally Optimal Block Preconditioned Conjugate Gradient (LOBPCG) 
algorithm~\cite{Knyazev:01} are often not adequate or efficient.
This is true even when vast computational resources are available.

One of the main obstacles to achieving high performance
in the existing algorithms is the cost associated with solving
a projected eigenvalue problem whose dimension is at least as
large as the number of eigenvalues to be computed.  The solution
of this projected eigenvalue problem is part of the so-called 
Rayleigh--Ritz (RR) procedure used to extract approximations to 
the eigenpairs from a carefully constructed subspace.  
When the number of desired 
eigenpairs is large, the cost for performing this step, 
which scales cubically with respect to the dimension of the projected 
matrix, cannot be ignored. To speed up the computation, one may use 
the ScaLAPACK library~\cite{scalapack} to perform the dense eigenvalue 
calculation in parallel on a distributed memory parallel computer.  
However, for this type of calculation, it is generally difficult 
to achieve good scalability beyond a few hundred processors.

Attempts have been made in recent work to address the issue of 
high RR cost in large-scale eigenvalue computations. 
One approach is based on the idea of spectrum slicing~\cite{Schofield.Chelikowsky.Saad:12} in which the spectrum of $A$ is divided into multiple small 
intervals, and eigenvalues belonging to different intervals are computed 
simultaneously. This algorithm is potentially scalable and does not 
suffer from high RR cost.  However, dividing the spectrum in an optimal 
way is nontrivial. Furthermore, computing interior clustered eigenvalues 
can be difficult.  Another approach is based on solving the eigenvalue
problem as a penalized trace minimization~\cite{Wen.Yang.Liu.Zhang:13TR}. 
By moving the orthonormality constraint to the objective function as 
a penalty term, this scheme can use unconstrained optimization 
techniques without performing frequent RR calculations. 
However, the efficiency of the method depends on an optimal choice of
the penalty parameter, which may not be easy to obtain. The significance of reducing the 
intensity of RR calculations was pointed out in earlier works as well, e.g., by Stewart and Jennings~\cite{Stewart.Jennings:81}.

In this paper, we present an algorithm that reduces the number of 
the RR calculations. 
Our approach is similar to the Davidson-Liu and LOBPCG methods in the sense that
a preconditioned 
short-term recurrence is used to update the approximation to the 
desired 
invariant subspace.  
A key difference in the proposed scheme is
that the coefficients of the short-term recurrence are obtained by 
solving $k/q$ independent $3q \times 3q$ eigenvalue problems instead of one large 
$2k \times 2k$ or $3k \times 3k$ eigenvalue problem, where $k$ is the number of desired eigenpairs and $q$ is a chosen block size independent of $k$. 
Instead of large RR computations at every iteration, periodic basis orthogonalization is performed in the new algorithm. 
The computational kernels used in this orthogonalization step typically run more 
efficiently than dense diagonalization on high performance 
parallel computers. 

The idea of replacing the solution of a large projected eigenproblem by a sequence of smaller problems
has been considered by Knyazev in the context of the LOBPCG II algorithm~\cite{Knyazev:01}, as a means to reduce the dimension of the LOBPCG trial 
subspace from $3k$ to $k$. While the approach significantly reduces the RR cost compared to the original version of LOBPCG, its application within LOBPCG II
does not eliminate the solution of a possibly large dense eigenproblem at every iteration. Specifically, instead of solving a $3k$-by-$3k$ eigenproblem as in the
original LOBPCG method, LOBPCG II solves a $k$-by-$k$ eigenproblem, which is still costly for large $k$. 


Our approach, which we refer to as the Projected Preconditioned Conjugate Gradient (PPCG) algorithm, 
can be easily implemented by making a relatively small modification
to the existing schemes implemented in many scientific software 
packages, such as planewave DFT based electronic structure calculation
software packages.  We will show by numerical examples that PPCG
indeed outperforms the current state-of-the-art algorithms implemented in the
widely used Quantum Espresso (QE) planewave density functional electronic structure software package~\cite{QE-2009}

In this work we only consider the the standard eigenvalue problem $A x = \lambda x$. 
The generalization of the new algorithm to the case of the generalized eigenvalue problem $A x = \lambda B x$ 
is straightforward and can be performed without factoring the Hermitian positive definite matrix $B$.

The paper is organized as follows. In section~\ref{sec:tracemin}, we discuss a few 
optimization based approaches for large-scale eigenvalue computations.
We present the basic version of the PPCG algorithm in section~\ref{sec:basic}.
The connection between our approach and other available algorithms is 
discussed in section~\ref{sec:connect}. 
A number of practical aspects for implementing the new method are addressed in section~\ref{sec:practical}.  
Section~\ref{sec:example} contains numerical results. Conclusions are given in section~\ref{sec:concl}. 

\section{Trace Minimization} \label{sec:tracemin}
The invariant subspace associated with the $k$ algebraically smallest 
eigenvalues of $A$ can be computed by solving the following constrained 
optimization problem
\begin{equation}
\min_{X^*X=I} \frac{1}{2}\mbox{trace}(X^* A X),
\label{eq:tracemin}
\end{equation}
where $X\in \IC^{n \times k}$.

There exist several approaches for large-scale eigenvalue computations
that are based directly on formulation~\eqref{eq:tracemin}. 
These approaches treat the eigenvalue problem 
as the minimization problem, which allows applying relevant optimization techniques for computing the targeted 
eigenpairs. 

A number of popular algorithms for computing invariant subspaces 
are based on gradient type methods
for minimizing~\eqref{eq:tracemin}.   
In particular, projecting the gradient of the objective function in~\eqref{eq:tracemin}
along the tangent of the orthonormality constraint $X^*X=I$ 
yields the residual
\begin{equation}
R = (I-XX^\ast)AX = AX - X(X^*AX),
\label{eq:resid}
\end{equation}
which can be chosen as the search direction in an optimization 
algorithm designed to solve ~\eqref{eq:tracemin}.  
A preconditioner $T$ can be introduced to yield a modified
search direction $T R$.

In the simplest version of the Davidson-Liu algorithm, a new approximation
$\bar{X}$ is constructed by taking it to be a linear combination
of $X$ and $TR$, i.e., we write
\[
\bar{X} = X C_1 + TR C_2,
\]
where $C_1, C_2 \in \IC^{k \times k}$ are chosen to minimize
the trace of $A$ within the subspace spanned by columns of $X$ and $TR$.
The optimal $C_1$ and $C_2$ can be obtained by computing the 
lowest $k$ eigenpairs of the projected $2k \times 2k$ eigenvalue problem
\begin{equation}\label{eq:projev}
(S^* A S)C = (S^* S) C\Omega, \ \ C^* (S^*S) C = I,
\end{equation}
where $S = [ X, \ TR ]$, $C \in \IC^{2k\times k}$, and 
$\Omega \in \mathbb{R}^{k \times k}$ is a diagonal matrix that 
contains the $k$ algebraically smallest eigenvalues.  We can then 
take $C_1$ to be the first $k$ rows of $C$ and $C_2$ to contain 
the remaining rows of $C$.
The main steps of the simplest version of the Davidson-Liu algorithm 
are outlined in Algorithm~\ref{alg:davidson}.  

\begin{algorithm}[htbp]
\begin{small}
\begin{center}
  \begin{minipage}{5in}
\begin{tabular}{p{0.5in}p{4.5in}}
{\bf Input}:  &  \begin{minipage}[t]{4.0in}
The matrix $A$, a preconditioner $T$ and
                 the starting guess of the invariant subspace
                 $X^{(0)} \in \IC^{n \times k}$ associated
                 with the $k$ smallest eigenvalues of $A$, $X^{(0)*}X^{(0)} = I$;
                  \end{minipage} \\
{\bf Output}:  &  \begin{minipage}[t]{4.0in}
                 An approximate invariant subspace $\IC^{n \times k}$ 
                 associated with $k$ smallest eigenvalues of $A$;
                  \end{minipage}
\end{tabular}
\begin{algorithmic}[1]
\STATE $X \leftarrow X^{(0)}$;
\WHILE {convergence not reached} 
  \STATE $R \leftarrow T(AX - X(X^* A X))$;
  \STATE $S \gets [X, \ R]$;
   \STATE Find eigenvectors $C$ associated with the $k$ smallest eigenvalues $\Omega$ of~\eqref{eq:projev};   
  \STATE  $X \leftarrow S C$;
\ENDWHILE
\end{algorithmic}
\end{minipage}
\end{center}
\end{small}
  \caption{Simplest version of Davidson-Liu algorithm}
  \label{alg:davidson}
\end{algorithm}

Algorithm~\ref{alg:davidson} can be modified to accumulatively include multiple $TR$ 
blocks computed from different iterations in $S$, which gives the conventional Davidson  method~\cite{Davidson:75}.
Alternatively, the algorithm's iterations can be altered to include the so-called ``conjugate'' direction $P$
as part of the search space $S$, 
i.e., one can let
$S \leftarrow [X, TR, P]$ and solve a $3k \times 3k$
projected eigenvalue problem~\eqref{eq:projev}.  
The block $P$ can be constructed as a linear combination of 
$TR$ and $P$ computed at the previous iteration. 
Such a modification leads to the locally optimal block preconditioned 
conjugate gradient (LOBPCG) algorithm originally proposed in~\cite{Knyazev:01}.

When a good preconditioner $T$ is available, as is the case 
for planewave based electronic structure calculations, 
both the simplest version of the Davidson-Liu algorithm and 
the LOBPCG algorithm can converge rapidly. The number of iterations 
required by LOBPCG to reach convergence is often smaller than that taken 
by the Davidson-Liu algorithm, but each Davidson iteration is
slightly cheaper because it solves a $2k \times 2k$ instead
of a $3k \times 3k$ projected eigenvalue problem. 
When $k$ is relatively small, such extra cost per iteration is
negligible. However, when $k$ is relatively large (e.g., 
on the order of thousands or more) the cost of solving the
projected eigenvalue problem, which we refer to as the 
RR cost, can no longer be ignored.

Although the RR eigenvalue problem can be solved in
parallel using the ScaLAPACK library, the performance
of this part of the calculation generally does not 
scale well beyond a few hundred cores.  Although some
progress has recently been made on speeding up symmetric dense
eigenvalue calculation on distributed memory parallel 
computers \cite{ELPA, ELEMENTAL}, the performance of the latest algorithms still
lags behind that of level three BLAS and other computational 
building blocks of electronic structure codes.

Another optimization based approach for eigenvalue computations 
was proposed by Sameh and Wisniewski~\cite{sw1982,st2000}. Their TRACEMIN algorithm is different from the 
gradient type schemes applied to~\eqref{eq:tracemin}. 
It relies on the trace minimization procedure, which solves  
a sequence of correction problems of the form  
\begin{equation}
\min_{X^\ast \Delta = 0} \mbox{trace}(X-\Delta)^\ast A (X-\Delta).
\label{eq:trmin_corr}
\end{equation}
The solution of~\eqref{eq:trmin_corr} is obtained 
by iteratively solving the projected linear system
\[
(M A M) \Delta = MAX, \ \ X^{\ast} \Delta  = 0,
\]
where $M = I-XX^{\ast}$. A RR procedure is then performed 
within the subspace spanned by the columns of $X-\Delta$ in each step to 
produce a new approximation to the solution of~\eqref{eq:tracemin}.

\section{The Projected Preconditioned Conjugate Gradient algorithm} 
\label{sec:basic}

In this section, we present a preconditioned conjugate gradient type of 
scheme to find a solution of the minimization problem~\eqref{eq:tracemin}. 
The proposed approach is motivated by the gradient projection techniques for constrained optimization (e.g.,~\cite{Haftka.Gurdal:92, Nocedal.Wright:99, Polyak_book_eng:87}).  

Given a function $f(x)$ whose minimum is sought over a set $Q$ defined by constraints, 
the general framework of gradient projection methods is to iteratively perform a sequence of updates
$\bar x \gets  x + \gamma s$, where the updated approximation $\bar x$ is allowed to leave the set $Q$ that represents 
feasible regions.  
The new approximation, however, is then projected back to the feasible set
$Q$, i.e., the new iterate $x$ is defined 
as $x \gets M_Q \bar x$, where $M_Q$ is an appropriately 
defined projector onto $Q$. 

The search direction $s$ can be defined in a number of ways. 
For example, 
it can be chosen as the gradient $\nabla f(x)$ of $f$ evaluated at the current approximation 
$x$~\cite{Levitin.Polyak:66,  Goldstein:64}. In equality 
constrained optimization, the search direction is often taken to be the 
projection of the gradient onto the tangent of constraints, i.e.,
$s = M \nabla f(x)$, where $M$ is a corresponding projection 
defined in terms of the normal of the equality constraint 
that implicitly defines the region $Q$ in which $x$ must lie~\cite{Rosen1:60, Rosen2:61}.  
In particular, for the equality constraint $X^*X = I$ in~\eqref{eq:tracemin},
$M = I-XX^{\ast}$.

We consider an extension of the gradient projection approach to 
trace minimization~\eqref{eq:tracemin} in which the approximation
to the 
minimizer is updated as follows:
\begin{equation}\label{eq:ppcg}
\bar X \gets X C_X + W C_W + P C_P, \quad X \gets M_Q \bar X, \quad \end{equation}   
where the search direction $W = (I - X X^*) T R$ is given by the preconditioned residual $T R = T(AX - X (X^* A X))$ 
projected onto the tangent of the orthonormality constraint and
$P = (I - X X^*) (W^{\prime} C_W^{\prime} - P^{\prime} C_P^{\prime})$ 
represents a conjugate direction in the same tangent space. The ``prime notation''
refers to the corresponding quantities from the previous step. 

Extracting the best approximation 
from the subspace spanned by the columns of $X$, $W$ and $P$ (as traditionally done) would   require 
a RR calculation that is costly when the number of desired
eigenpairs is large.  To reduce such cost, we relax the optimality requirement on the search 
parameters $C_X$, $C_W$, and $C_P$ in~\eqref{eq:ppcg}, and allow them to introduce non-orthogonality 
in the updated columns of $\bar X$. This places $\bar X$ outside of the feasible region given by the orthogonality
constraint, which is remedied by the subsequent application of a projector $M_Q$.

Specifically,    
let us restrict $C_X$, $C_W$, and $C_P$ to be diagonal matrices. 
An advantage of such a restriction is that the iteration parameters associated
with each column of the updated $X$ can be determined independently. 
%
More generally, it is possible to allow $C_X$, 
$C_W$ and $C_P$ to be block diagonal matrices 
with small diagonal blocks (e.g., $5\times 5$ or $10 \times 10$ blocks). One can then expect that, if properly chosen, the extra degrees of 
freedom introduced by these diagonal blocks can reduce the iteration count.
The general block diagonal formulation will be discussed in section~\ref{subsec:block}.

Let $C_X = \text{diag} \{ \alpha_1, \ldots,\alpha_k \}$, $C_W = \text{diag}\{ \beta_1, \ldots,\beta_k \}$, 
and $C_P = \text{diag}\{ \gamma_1, \ldots,\gamma_k \}$.
Then iteration~\eqref{eq:ppcg} gives a sequence of $k$ single-vector 
updates 
\begin{equation}\label{eq:upd_col}
\bar x_j \gets \alpha_j x_j + \beta_j w_j +  \gamma_j p_j, \quad j = 1, \ldots,k; 
\end{equation}
where $\bar x_j$, $x_j$, $w_j$, and $p_j$ denote the $j$th columns of $\bar X$, $X$, $W$, and $P$, respectively. 
Let us choose $\alpha_j$, $\beta_j$, and $\gamma_j$ in such a way that
each corresponding updated column $\bar x_j$ yields the minimizer of $x^* A x$, subject to 
the normalization constraint $\|x\| = 1$, over the corresponding subspace 
spanned by $x_j$, $w_j$, and $p_j$.  
Clearly, the computations of the parameter triplets are independent of each other, and can be
performed by solving $k$ separate $3$-by-$3$ eigenvalue problems. 

As a result of the decoupled steps~\eqref{eq:upd_col}, the columns $\bar x_j$ are generally not orthogonal to each other. 
Moreover, 
they can all converge to the same eigenvector associated with the smallest 
eigenvalue of~$A$ without any safeguard in the algorithm. To overcome this issue, we project the updated block $\bar X$ 
back onto the orthonormality constraint $X^* X = I$ by 
performing a QR factorization of $X$ and setting the new approximation $X$ to the obtained orthogonal factor.
This step corresponds to the action of applying the projector $M_Q$ in~\eqref{eq:ppcg}, which we  
further denote by $X \leftarrow  \texttt{orth}(\bar X)$. 

\begin{algorithm}[htbp]
\begin{small}
\begin{center}
  \begin{minipage}{5in}
\begin{tabular}{p{0.5in}p{4.5in}}
{\bf Input}:  &  \begin{minipage}[t]{4.0in}
The matrix $A$, a preconditioner $T$, and a starting guess of the invariant subspace $X^{(0)} \in \IC^{n \times k}$ 
associated with the $k$ smallest eigenvalues of $A$; 
                  \end{minipage} \\
{\bf Output}:  &  \begin{minipage}[t]{4.0in}
                 An approximate invariant subspace $X \in \IC^{n \times k}$ 
                 associated with the $k$ smallest eigenvalues of $A$;
                  \end{minipage}
\end{tabular}
\begin{algorithmic}[1]
\STATE $X \gets \texttt{orth} (X^{(0)})$; $P \gets \lt[ \ \rt]$;
\WHILE {convergence not reached}
  \STATE $W \gets T (AX - X(X^* A X))$;
  \STATE $W \gets (I - XX^*)W$;
  \STATE $P \gets (I - XX^*)P$;
  \FOR {$j = 1, \ldots, k$} 
      \STATE $S \gets [x_j , w_j , p_j]$;
      \STATE Find the smallest eigenpair ($\theta_{\min}, c_{\min}$) of $S^*AS c = \theta S^* Sc$, where $c^* S^*S c = 1$; 
      \STATE $\alpha_j \gets c_{\min}(1)$, $\beta_j \gets c_{\min}(2)$; and  $\gamma_j \gets c_{\min}(3)$ ($\gamma_j = 0$ 
at the initial step);
      \STATE $p_j \gets \beta_j w_j + \gamma_j p_j$;
      \STATE $x_{j} \gets \alpha_j x_j + p_j$. 
  \ENDFOR
  \STATE $X \gets \texttt{orth}(X)$;
  \STATE If needed, perform the Rayleigh-Ritz procedure within $\mbox{span}(X)$;
\ENDWHILE
\end{algorithmic}
\end{minipage}
\end{center}
\end{small}
  \caption{The projected preconditioned conjugate gradient (PPCG) algorithm}
  \label{alg:ppcg0}
\end{algorithm}

The proposed approach is outlined in 
Algorithm~\ref{alg:ppcg0} that we refer to as the Projected
Preconditioned Conjugate Gradient (PPCG) algorithm.
Note that in practical implementations we require the method to perform 
the RR procedure every once in a while (step 14). 
Such periodic RR computations allow ``rotating'' the columns of $X$ closer to the targeted eigenvectors. 
They also provide opportunities for us to identify converged eigenvectors and 
deflate them through a locking mechanism 
discussed in
section~\ref{sec:practical}.
In our experiments, we typically perform an RR calculation every 5-10 iterations,
which is significantly less frequent compared to the existing eigensolvers that 
perform the RR procedure at each step.   

In principle, the RR procedure in step 14 of Algorithm~\ref{alg:ppcg0} can be omitted.  
In this case, the columns of the iterates $X$ generally do not converge to 
eigenvectors, i.e.,
$X$ only represents some orthonormal basis of the approximate invariant subspace.
However, in a number of our test problems, 
the absence of step 14 led to the convergence deterioration. Therefore, performing a periodic RR step can be helpful to
ensure the eigensolver's robustness. We will return to this discussion in section~\ref{sec:practical}.

An important element of Algorithm~\ref{alg:ppcg0} is the orthonormalization of the block $X$ in step~13. It is clear that 
the $\mbox{orth}(X)$ procedure is well-defined if and only if it is applied to a full-rank matrix, i.e., if and only if the 
single-vector sweep in Steps 6-12 yields a block $X$ of linearly independent vectors. 
The following theorem shows that the linear independence among 
columns of $X$ is guaranteed if all the parameters $\alpha_j$ are nonzero. 

\begin{theorem}\label{thm:1}
Let vectors $\bar x_j$ be computed according to~\eqref{eq:upd_col}, where $x_j$, $w_j$, and $p_j$ denote the $j$th columns of $X$, $W$, and $P$, respectively; and let $X^*X = I$. 
Then the matrix $\bar X = [\bar x_1, \ldots, \bar x_k]$ is full-rank if $\alpha_j \neq 0$ for all $j$.
\end{theorem}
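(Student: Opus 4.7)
The plan is to exploit the orthogonality between the blocks $X$, $W$, and $P$ that is built into Algorithm~\ref{alg:ppcg0}. Specifically, after steps 4 and 5 of the algorithm, both $W$ and $P$ are in the range of the projector $I - XX^*$, so
\begin{equation*}
X^* W = 0, \qquad X^* P = 0.
\end{equation*}
Combined with the assumption $X^*X = I$, this is really all we need.

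I would first rewrite the single-vector updates~\eqref{eq:upd_col} in block form as
\begin{equation*}
\bar X = X D_\alpha + W D_\beta + P D_\gamma,
\end{equation*}
where $D_\alpha = \mathrm{diag}\{\alpha_1,\ldots,\alpha_k\}$, $D_\beta = \mathrm{diag}\{\beta_1,\ldots,\beta_k\}$, and $D_\gamma = \mathrm{diag}\{\gamma_1,\ldots,\gamma_k\}$. Multiplying on the left by $X^*$ and using $X^*X = I$, $X^*W = 0$, $X^*P = 0$ yields
\begin{equation*}
X^* \bar X = D_\alpha.
\end{equation*}

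To conclude, I would argue by contradiction (or equivalently via the null space). Suppose $\bar X c = 0$ for some $c \in \IC^k$. Then $X^*\bar X c = D_\alpha c = 0$. Since $\alpha_j \neq 0$ for every $j$, the diagonal matrix $D_\alpha$ is invertible, forcing $c = 0$. Hence the null space of $\bar X$ is trivial, so $\bar X$ has $k$ linearly independent columns and is of full rank, as claimed.

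There is no real obstacle here: the crux is merely noticing that the explicit projection $(I-XX^*)$ applied to $W$ and $P$ in steps 4--5 of Algorithm~\ref{alg:ppcg0} is precisely what decouples the ``$X$-part'' of $\bar X$ from the ``$W$-part'' and the ``$P$-part.'' This reduces the rank question to the invertibility of the diagonal matrix $D_\alpha$, which is immediate from the hypothesis. A minor care point to mention in the writeup is that at the initial step we have $P = [\,]$ (and implicitly $\gamma_j = 0$), in which case the same argument applies with the $P D_\gamma$ term absent.
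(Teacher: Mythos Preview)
Your proof is correct and follows essentially the same approach as the paper: write the update in block form, use $X^*X=I$ together with $X^*W=X^*P=0$ to obtain $X^*\bar X = D_\alpha$, and conclude via a null-space (contradiction) argument that $\bar X$ has full rank since $D_\alpha$ is invertible. The only cosmetic difference is that the paper groups $W D_\beta + P D_\gamma$ into a single block $K$ with $X^*K=0$.
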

\begin{proof}
Let us write~\eqref{eq:upd_col} in the matrix form as
\begin{equation}\label{eq:expandx}
\bar X  = X C_X + K, 
\end{equation}
where $K = W C_W + P C_P$; with $C_X$,  $C_W$,  $C_P$ being the diagonal matrices of iteration coefficients 
$\alpha_j$, $\beta_j$, and $\gamma_j$, respectively.
We assume, on the contrary, that columns of $\bar{X}$ are linearly dependent.
Then there exists a vector $y\neq 0$ such that $\bar{X}y=0$. It follows from~\eqref{eq:expandx}
and the conditions $X^* X=I$ and $X^{*}K=0$ that $C_Xy=0$. Since $C_X$
is diagonal, then at least one of its diagonal elements
must be zero. 
This contradicts the assumption that  $\alpha_j \neq 0$ for all $j$.  
%
\end{proof}

Theorem~\ref{thm:1} provides us with a simple indicator of rank deficiency in 
$\bar{X}$. In the case when $\bar{X}$ becomes rank deficient, there is a simple
way to fix the problem. We can simply backtrack and exclude the $P$ block 
in \eqref{eq:ppcg} and take a steepest descent-like step by recomputing 
the coefficients in $C_X$ and $C_W$.

The next theorem shows that, in this case, if the preconditioner $T$ is Hermitian positive 
definite (HPD), the recomputed $\alpha_j$'s are guaranteed to be nonzero and 
therefore the updated $X$ is full rank, unless some columns of $W$ become
zero, which indicate the convergence of some eigenvectors that should be
deflated at an earlier stage.

\begin{theorem}\label{thm:2}
Let $P \equiv 0$ in \eqref{eq:ppcg} 
so that the update in $X$ are computed as 
\begin{equation}\label{eq:upd_col_trunc}
\bar x_j \gets \alpha_j x_j + \beta_j w_j, \quad j = 1, \ldots,k.
\end{equation}
If the preconditioner $T$ is~HPD and the $j$th column $r_j$ of the residual 
$AX - X(X^*AX)$ is nonzero, then $\bar X = [\bar x_1, \ldots, \bar x_k]$ 
must have a full rank. 
\end{theorem}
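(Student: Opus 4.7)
The plan is to reduce the claim to showing that the coefficient $\alpha_j \neq 0$ for every $j$, then invoke Theorem~\ref{thm:1}. With $P \equiv 0$, each pair $(\alpha_j, \beta_j)$ is the eigenvector associated with the smallest eigenvalue of the $2\times 2$ generalized eigenproblem $S_j^* A S_j \, c = \theta\, S_j^* S_j \, c$ for $S_j = [x_j, w_j]$, so I only need to inspect the structure of this small pencil.

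First I would exploit the projection step $W \gets (I - XX^*)W$ in Algorithm~\ref{alg:ppcg0} to conclude $X^*W = 0$; in particular $x_j^* w_j = 0$, which diagonalizes the Gram matrix to $S_j^* S_j = \mathrm{diag}(1, \sigma_j)$ with $\sigma_j = w_j^* w_j$. Next, since $r_j = Ax_j - X(X^*AX)_{:,j}$ satisfies $X^* r_j = 0$ by construction, I would expand the off-diagonal entry $b_j := x_j^* A w_j = (Ax_j)^* w_j$, use $X^* w_j = 0$ to drop the $X(X^*AX)_{:,j}$ contribution, substitute $w_j = (I - XX^*)Tr_j$, and use $X^* r_j = 0$ once more to collapse the result to
\[
b_j \;=\; r_j^{*}\, T\, r_j.
\]
The hypothesis that $T$ is HPD together with $r_j \neq 0$ then yields $b_j > 0$.

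A byproduct of this positivity is that $w_j$ itself cannot vanish: if $w_j = 0$, then $Tr_j \in \mathrm{span}(X)$ would force $r_j^* T r_j = 0$, contradicting HPDness. Hence $\sigma_j > 0$, the pencil $(S_j^* A S_j,\, S_j^* S_j)$ is regular, and $c_{\min} = (\alpha_j, \beta_j)^{T}$ is a well-defined nonzero eigenvector. Reading the first row of the eigenvector equation gives $(x_j^* A x_j)\alpha_j + b_j \beta_j = \theta\, \alpha_j$. Supposing $\alpha_j = 0$ forces $b_j \beta_j = 0$; but then $\beta_j \neq 0$ (since $c_{\min}$ is nonzero with vanishing first component) combined with $b_j > 0$ gives a contradiction. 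Therefore $\alpha_j \neq 0$ for every $j$, and Theorem~\ref{thm:1} delivers that $\bar X$ has full rank.

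The main obstacle is isolating the identity $b_j = r_j^* T r_j$, since this is the single point where the HPD structure of $T$ actually enters the argument; once that expression is in hand, the positivity and the $2\times 2$ eigenvalue manipulation are essentially forced. Everything else is bookkeeping with the orthogonality relations $X^*X = I$, $X^* W = 0$, and $X^* r_j = 0$ built into the algorithm.
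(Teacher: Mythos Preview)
Your argument is correct and follows essentially the same route as the paper: reduce to $\alpha_j\neq 0$ via Theorem~\ref{thm:1}, derive the key identity $x_j^{*}Aw_j=r_j^{*}Tr_j$ from the orthogonality relations, and obtain a contradiction from the HPD assumption on~$T$. Your explicit verification that $w_j\neq 0$ (hence the $2\times 2$ pencil is regular) is a small refinement the paper omits, but otherwise the two proofs coincide.
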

\begin{proof} It follows from Theorem 1 that if $\bar{X}$ is rank deficient, 
then there is at least one $j$ such that $\alpha_j = 0$. Since 
$\alpha_j$ is the first component of an eigenvector $c = (\alpha_j, \beta_j)^*$ of the 2-by-2 projected eigenproblem 
\begin{equation}\label{eq:proj2}
\left(
\begin{array}{cc}
x_j^*A x_j & x_j^* A w_j \\
x_j^*A w_j & w_j^* A w_j 
\end{array}
\right) c = \theta 
\left(
\begin{array}{cc}
x_j^* x_j & 0 \\
0 & w_j^* w_j 
\end{array} 
\right) c,
\end{equation}
the coefficient $\beta_j$ must be nonzero, and the matrix on the left hand 
side of \eqref{eq:proj2} must be diagonal, i.e., $x_j^* A w_j = 0$. 
However, since
\[
x_j^*Aw_j =  (Xe_j)^*A (I-XX^*) T \left[(I-XX^*)AX\right]e_j = r_j^* T r_j,
\]
$r_j$ must be zero since $T$ is assumed to be HPD. This contradicts the 
assumption that $r_j$ is nonzero. Hence, it follows that $\alpha_j \neq 0$, 
and $\bar{X}$ must be full~rank.
\end{proof}


We note, however, that the situation where $\alpha_j$ is zero is very unlikely in practice
and, in particular, has never occurred in our numerical tests.   


\section{Relation to other algorithms}\label{sec:connect}
The PPCG algorithm is similar to a variant of the LOBPCG 
algorithm called LOBPCG II presented in~\cite{Knyazev:01}, 
in the sense that it 
updates 
the subspace via $k$ independent $3 \times 3$ RR computations rather than one large $3k \times 3k$ one, as in LOBPCG. 
As discussed in the next section, the PPCG algorithm generalizes this to $3q \times 3q$ RR computations, where $q$ is a chosen block size, to increase convergence rate and better exploit available computational kernels. 
The main difference, however, is that in LOBPCG II, 
the RR procedure is performed within the subspace
spanned by the columns of $X$ in each iteration, 
whereas in PPCG, $X$ is merely orthogonalized, 
and RR computations are invoked only periodically
(e.g., every 5 or 10 iterations). Another difference is
related to the construction of the blocks $W$ and $P$. In contrast
to the proposed PPCG algorithm, LOBPCG II does not carry out the orthogonalizations
of the preconditioned residuals and conjugate directions against the
approximate invariant subspace $X$.
Furthermore, to allow the replacement of RR with periodic orthogonalization, the definition of the PPCG residuals has been generalized to the case in which $X$ is not necessarily formed by a basis of Ritz vectors and
the matrix $X^*AX$ is not necessarily diagonal. 
As a result, the separate minimizations in
PPCG and LOBPCG II are performed with respect to different 
subspaces, and the methods
are not equivalent even if the PPCG block size is $1$.

The TRACEMIN algorithm~\cite{sw1982} also becomes similar to the PPCG algorithm 
if the RR procedure is performed periodically. Instead of minimizing 
several Rayleigh quotients, TRACEMIN solves several
linear equations using a standard preconditioned conjugate gradient (PCG)
algorithm. Typically, more than one PCG iteration is needed to
obtain an approximate solution to each equation.


The PPCG method can also be viewed as a compromise between
a full block minimization method such as the LOBPCG method,
which converges rapidly but has a higher RR cost per iteration,
and a single vector method combined with an appropriate 
deflation scheme (also known as a band-by-band method), which 
has a negligible RR cost but slower overall convergence rate because
one eigenpair is computed at a time. Also, band-by-band methods
cannot effectively exploit the concurrency available in multiplying
$A$ with a block of vectors, and hence are often slower in practice 
on high performance parallel computers.

\section{Practical aspects of the PPCG algorithm}\label{sec:practical}

In this section, we address several practical aspects of the PPCG algorithm 
that are crucial for achieving high performance. We first consider the 
generalization of the single-vector updates in~\eqref{eq:upd_col} 
to block updates.

\subsection{Block formulation}\label{subsec:block}

As indicated earlier, we can allow $C_X$, $C_W$ and 
$C_P$ to be block diagonal. 
In this case,
$C_X = \mbox{diag}\{ C_{X_1}, \ldots, C_{X_s}\}$, 
$C_W = \mbox{diag}\{ C_{W_1}, \ldots, C_{W_s}\}$, 
and $C_P = \mbox{diag}\{ C_{P_1}, \ldots, C_{P_s}\}$,
and $X$, $W$ and $P$ can be partitioned conformally as 
$X = [X_1, X_2, \ldots , X_s]$, 
$W = [W_1, W_2, \ldots , W_s]$, and 
$P = [P_1, P_2, \ldots , P_s]$, 
where the $j$th subblocks of $X$, $W$, and $P$ contain $k_j$ columns, and
$\sum_{j=1}^s k_j = k $. 

The single-column sweep~\eqref{eq:upd_col} is then replaced by block updates
\begin{equation}\label{eq:upd_block}
\bar X_j \gets X_j C_{X_j} + W_j C_{W_j} + P_j C_{P_j}, \quad j = 1,\ldots,s. 
\end{equation}
After columns of $\bar X = [\bar X_1, \bar X_2, \ldots , \bar X_s]$ 
are orthonormalized, we obtain a new approximation which is used as a 
starting point for the next PPCG iteration. For each $j$, the block
coefficients $C_{X_j}$, $C_{W_j}$, and $C_{P_j}$
in~\eqref{eq:upd_block} are chosen to minimize the trace~\eqref{eq:tracemin} 
within span$\{X_j, W_j, P_j\}$. This is equivalent to 
computing the $k_j$ smallest eigenvalues and corresponding eigenvectors
of $3 k_j$-by-$3 k_j$ eigenvalue problems~\eqref{eq:projev} with 
$S = [X_j, W_j, P_j]$.  Thus, the block formulation of the PPCG algorithm performs
$s$ iterations of the ``for'' loop in lines 6-12 of Algorithm~\ref{alg:ppcg0}.

Note that, in the extreme case where $s = 1$, i.e., the splitting corresponds to the whole block, the PPCG algorithm becomes equivalent to 
LOBPCG~\cite{Knyazev:01}. In this case the matrices $C_X$, $C_W$, 
and $C_P$ are full and generally dense, and the updated 
solution is optimal within the subspace spanned by all columns of $X$, $W$, and $P$.

As we will demonstrate in section~\ref{sec:example}, making
$C_X$, $C_W$ and $C_P$ block diagonal
generally leads to a reduction in the number of outer iterations required
to reach convergence. However, as the block size increases, the 
cost associated with solving $s$ $3k_j \times 3k_j$ eigenvalue 
problems also increases.  The optimal choice of $k_j$ will be problem- and computational platform 
dependent. Heuristics must be developed to set $k_j$ to an appropriate
value in an efficient implementation.

In our current implementation, we set $k_j$ to a constant $sbsize$ with
the exception that the last block of $X$, $W$, and $P$ may contain a 
slightly different number of columns.   
In principle, one can choose different $k_j$ values for each subblock. For example, this could be helpful if additional information about the distribution of $A$'s spectrum is available.
In this case, a proper uneven splitting could potentially allow for a better resolution of eigenvalue clusters. 

\subsection{Convergence criteria}
We now discuss appropriate convergence criteria for terminating
the PPCG algorithm.  If, instead of individual eigenpairs, we are only interested in the invariant
subspace associated with the smallest eigenvalues of $A$, 
we may use the following relative subspace 
residual norm 
\begin{equation}
\frac{\| AX - X(X^\ast AX) \|_F} {\| X^\ast A X\|_F},
\label{eq:resnrm}
\end{equation}
as a metric to determine when to terminate the PPCG iteration, where $\|\cdot\|_F$ is the Frobenius
norm.
No additional multiplication of $A$ with $X$ is required
in the residual calculation.  Checking the subspace 
residual does not require a RR calculation. 

Since the above measure monitors the quality of the whole approximate invariant subspace,
it does not allow one
to see whether the 
subspace 
contains good approximations to some of the eigenvectors
that can be locked and deflated from subsequent 
PPCG iterations.  This is a reason periodic
RR can be helpful. We will discuss 
deflation in section~\ref{sec:deflation}.

In some cases, especially in the early PPCG iterations in which 
significant changes in $X$ can be observed, it may not be 
necessary to check the subspace residual.  Since the objective of 
the algorithm is to minimize the trace of $X^*AX$, it is reasonable 
to use the relative change in the trace, which can be computed  
quickly, as a measure for terminating the PPCG iteration.
To be specific, if $X$ and $X^{\prime}$ are approximations 
to the desired invariant subspace obtained at the current and previous
iterations, we can use 
\[
\frac{\left | \mathrm{trace}\left( X^* A X \right) - 
\mathrm{trace}\left(X^{\prime *}A X^{\prime} \right) \right |}
{\mathrm{trace}\left(X^* A X\right)}
< \tau,
\]
as a criterion for terminating the PPCG iteration, where
$\tau$ is an appropriately chosen tolerance. 
This criterion is often used in an iterative eigenvalue
calculation called within each self-consistent 
field iteration for solving the Kohn--Sham nonlinear
eigenvalue problem~\cite{Hohenberg.Kohn:64, Kohn.Sham:65}.

\subsection{Buffer vectors}

When the $k$th eigenvalue is not well separated from the $(k+1)$st 
eigenvalue of $A$, the convergence of that eigenvalue may be 
slow in subspace-projection based solution methods, see, e.g., \cite{Knyazev:01}. 

As noted in~\cite{Kn.Ar.La.Ov:07}, one way to overcome this, is to expand the block $X$ 
with $l$ additional columns $Y_l$ (a standard approach in electronic structure calculations) which we call \textit{buffer vectors}. In this case,
we set $X \gets [X, Y_l]$ and
apply Algorithm~\ref{alg:ppcg0} to the extended block with $k' = k+l$ columns. The main difference is that 
one has to monitor the convergence only to the invariant subspace that is associated with the $k$ wanted
eigenvalues, i.e., only the initial $k$ columns of the expanded $X$ should be used to evaluate the convergence
metrics discussed in the previous section.   

It is clear that introducing buffer vectors increases the cost of the algorithm, per iteration. 
For example, the cost of matrix--block multiplications with $A$ becomes higher, more work is required to perform dense
linear algebra (BLAS3) operations, the number of iterations of the inner ``for'' loop in lines 6-12 increases to $k' = k+l$ steps, etc. 
However, the number of buffer vectors $l$ is normally chosen to be small relative to $k$, e.g., $1-5\%$ of the number of targeted eigenpairs,
and the increase in the computational work per iteration is relatively small, while the decrease in iterations required to reach convergence can be substantial.

\subsection{Orthogonal projection of the search direction}

The projector $I - XX^\ast$ in the definition of search directions $W$ and $P$ turns out to be crucial for achieving rapid
convergence of the PPCG algorithm. This is in contrast to the LOBPCG algorithm,  
where applying $I - XX^\ast$ is not as important,
at least not in exact arithmetic, because a new approximation
to the desired invariant subspace is constructed from the subspace
spanned by the columns of $X$, $W$ and $P$. The use of $I-XX^\ast$ 
in the construction of $W$ and $P$ does not change that subspace.
However, in PPCG, application of $I - XX^\ast$ affects the low-dimensional 
subspaces spanned by individual columns of $X$, $W$, and $P$. 


\begin{figure}[htbp]
\begin{center}%
    \includegraphics[width=6cm]{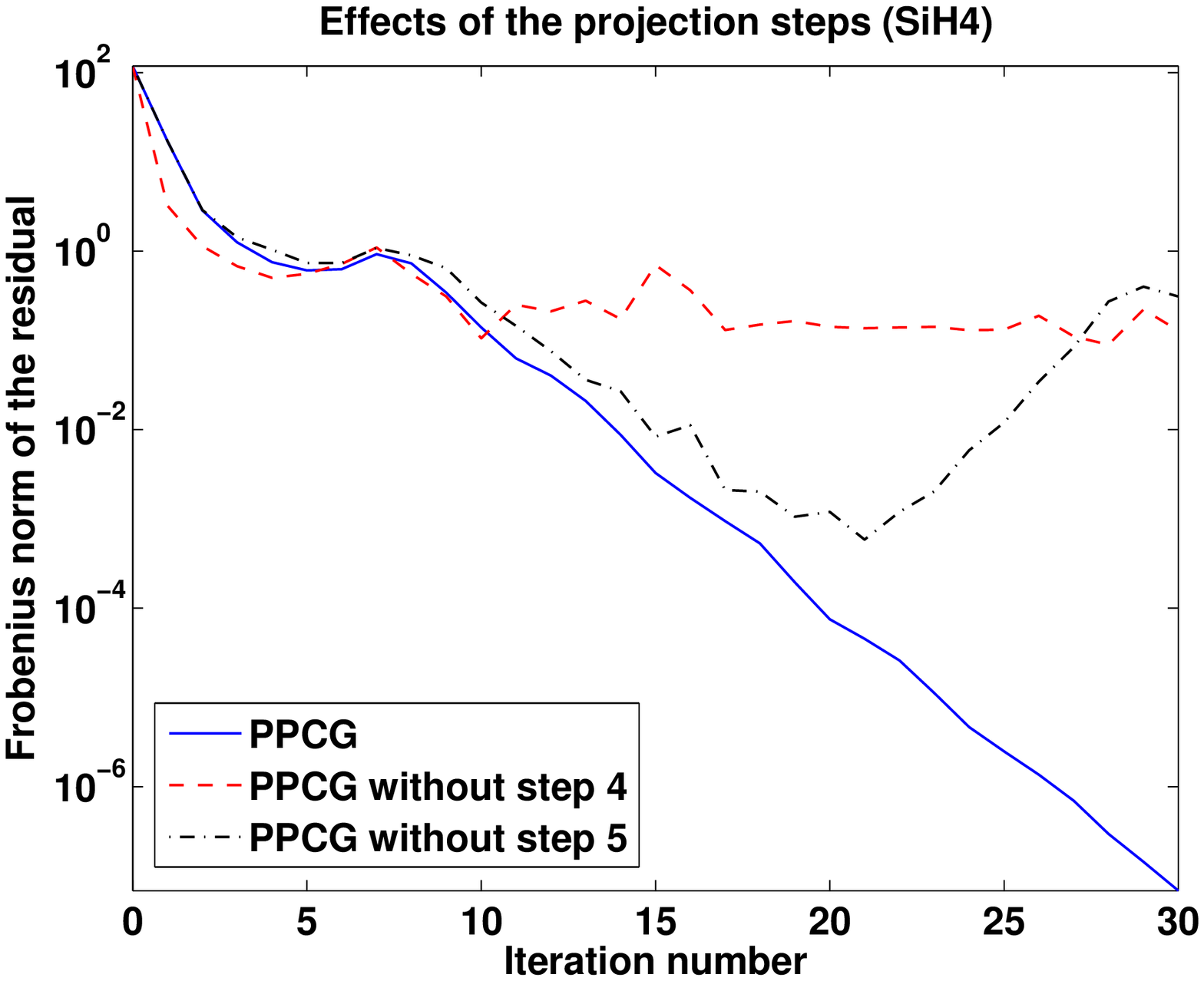}
    \includegraphics[width=6cm]{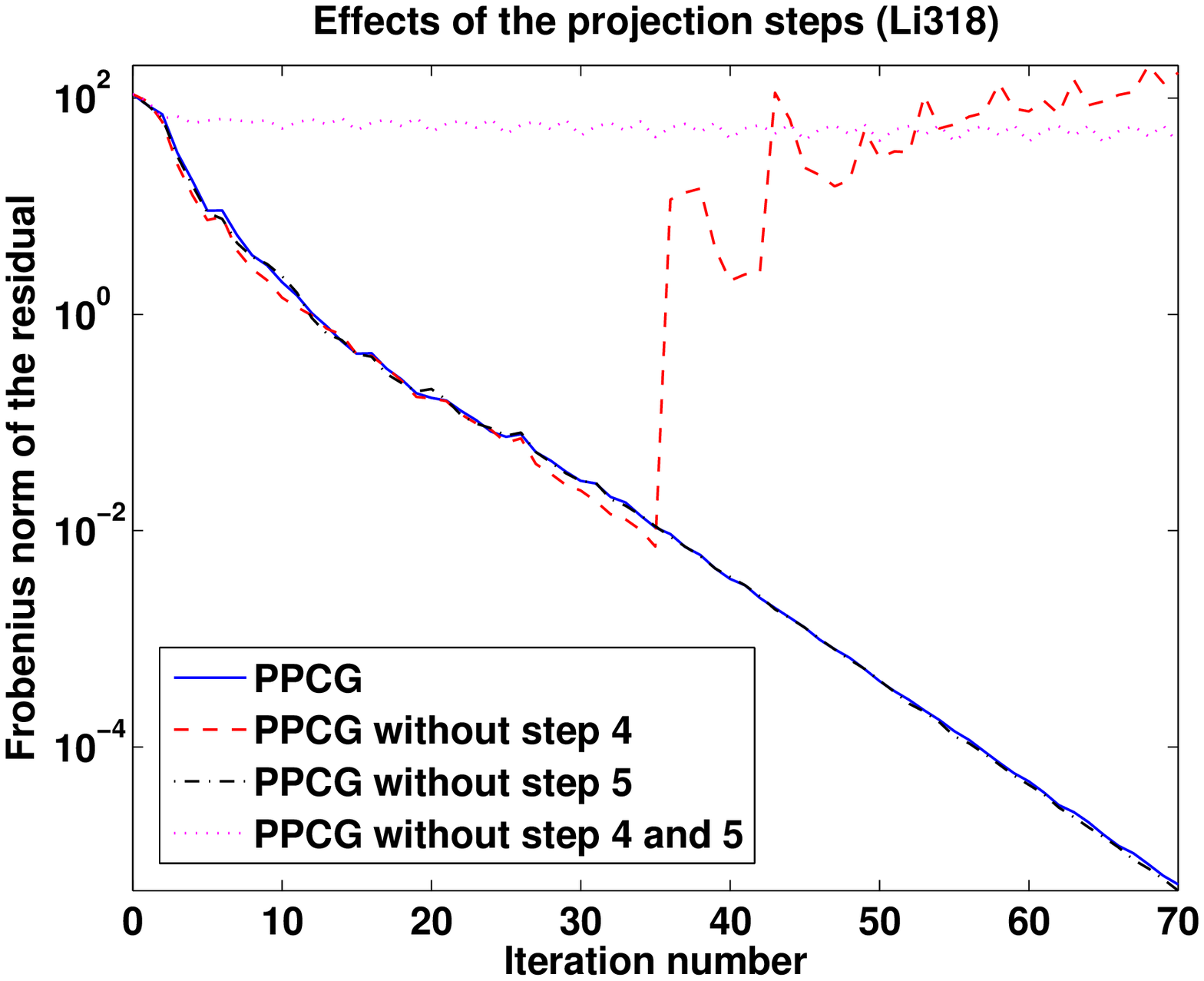}
\end{center}
\caption{Effects of the projection steps 4 and 5 of Algorithm~\ref{alg:ppcg0} 
on convergence. The PPCG variants are applied to compute 10 (left) and 2,000 (right) lowest eigenpairs of the 
converged Kohn-Sham Hamiltonian of the silane molecule (left) and the Li318 lithium-ion electrolyte (right). 
}\label{fig:proj} 
\end{figure}


As demonstrated in Figure~\ref{fig:proj}, the PPCG algorithm can be very sensitive to the orthogonal projector used in steps 4 and 5 of Algorithm~\ref{alg:ppcg0}. We observe that removing either of the two projection steps can lead to 
a severe deterioration of convergence. 
In Figure~\ref{fig:proj} (left) the PPCG algorithm is used to compute the 10 
lowest eigenpairs of a Kohn-Sham Hamiltonian of the SiH4 (silane) molecule. 
In Figure~\ref{fig:proj} (right), a similar computation is performed 
for the Li318 (lithium-ion electrolyte) system, where $2,000$ eigenpairs are sought. 

Interestingly, we observed that the effects of applying $I - XX^\ast$ are more pronounced in the cases where $A$ has multiple
eigenvalues. Furthermore, in some experiments, we noticed that skipping the projector only in $P$ may not alter the convergence; see Figure~\ref{fig:proj} (right). 
Nevertheless, we recommend keeping $(I - XX^*)$ for computing both $W$ and $P$
to achieve robust convergence. 

\subsection{Orthogonalization of the approximate invariant subspace}\label{subsec:qr}
There are a number of ways to obtain an orthonormal basis of $X$ after
its columns have been updated by the ``for'' loop in lines 6-12 of
Algorithm~\ref{alg:ppcg0}. If the columns of $X$ are far from 
being linearly dependent, an orthonormalization procedure based on using
the Cholesky factorization of $X^{\ast} X = R^{\ast}R$, where $R$ 
is a unit upper triangular matrix, is generally efficient. In this case, 
$X$ is orthonormalized by 
\[
X \leftarrow X R^{-1},
\]
i.e., step 13 of Algorithm~\ref{alg:ppcg0} is given by the 
QR decomposition based on the Cholesky decomposition (the Cholesky QR factorization). 

If the columns of $X$ are almost orthonormal, as expected when
$X$ is near the solution to the trace minimization problem, we may 
compute the orthonormal basis as
$
X \gets X (X^* X)^{-1/2}
$,  
where $(X^* X)^{-1/2} = (I + Y)^{-1/2}$ can be effectively approximated by several
terms in the Taylor expansion of $f(x) = \sqrt{1 + y}$. This gives the 
following orthogonalization procedure:
\[
X \leftarrow X (I - Y/2 + 3Y^2/8 - 5Y^3/16 + \cdots), \quad Y = X^* X -  I.
\]
Since $Y$ is likely to be small in norm, we may only need three or four terms 
in the above expansion to obtain a nearly orthonormal $X$.
An attractive feature of this 
updating scheme is that it uses only dense matrix--matrix multiplications 
which can be performed efficiently on modern high performance 
computers. Note that the updated matrix represents an orthonormal factor in the polar
decomposition~\cite{Horn.Johnson:90} of $X$, which gives a matrix with orthonormal columns that is closest to $X$~\cite{Fan.Hoffman:55}.

\begin{figure}[htbp]
\begin{center}%
    \includegraphics[width=6cm]{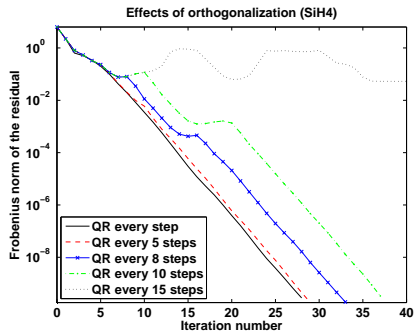}
\end{center}
\caption{Effects of removing the orthonormalization step 13 of 
Algorithm~\ref{alg:ppcg0}, when computing the $4$ lowest 
eigenpairs of a Kohn-Sham Hamiltonian associated with the silane molecule. 
The problem size is $2,103$.}\label{fig:orth_skip} 
\end{figure}

We have observed that columns of $X$ often remain nearly orthonormal
after they are updated according to \eqref{eq:upd_block}.  
Motivated by this observation, we experimented with performing
orthonormalization periodically (step 13 of Algorithm~\ref{alg:ppcg0}) 
in the PPCG iteration. Figure~\ref{fig:orth_skip} demonstrates the effects 
of this strategy on the convergence of the algorithm.
The plotted convergence curves correspond to PPCG runs in which
orthonormalization is performed every $t$ steps, where $t = 1, 5, 8, 10, 15$. 
In our implementation, we use the Cholesky QR factorization to 
orthonormalize the columns of $X$.

As can be seen in Figure~\ref{fig:orth_skip}, skipping the QR factorization 
of $X$ for a small number of PPCG iterations (up to 5 steps in this example) 
does not affect the convergence of the algorithm. In this case, the loss of
orthogonality, which can be measured by $\|X^* X - I\|_F$, is 
at most $O(10^{-1})$. Note that when the columns of
$X$ are not orthonormal, the projectors in steps 4 and 5 of 
Algorithm~\ref{alg:ppcg0} become approximate projectors.
Nevertheless, the convergence of PPCG is not substantially affected as
long as $t$ is not too large.   
However, if we reduce the frequency of the QR factorizations significantly, 
the number of PPCG iterations required to reach convergence starts to increase. 
For this example, when we perform the QR factorization every $15$ iterations, 
PPCG fails to converge (within an error tolerance of $10^{-2}$) within 40 
iterations. In this case, the loss of orthogonality in $X$ reaches 
$O(1)$, which severely affects the convergence of the method.
 
Thus, in order to gain extra computational savings, we can devise an 
optional heuristic based on the measured loss of orthogonality. 
We can decide to skip step 13 of Algorithm~\ref{alg:ppcg0} if the 
loss of orthogonality is relatively small. 


\subsection{Periodic RR computation}
For many problems, the orthogonal projection of $W$ and $P$ against 
columns of $X$ and subsequent orthogonalization are not enough to ensure that $X$ converges rapidly 
to a basis of the desired invariant subspace.  
%
We found that a practical remedy for avoiding possible convergence 
degradation or failure is to perform the RR procedure periodically, 
which has the effect of systematically repositioning the $j$th column 
of $X$ towards the eigenvector associated with the $j$th eigenvalue of $A$. 
In this case, since 
each column of $X$ is forced to be sufficiently close to an eigenvector, minimizing 
$k$ Rayleigh quotients separately 
becomes
just as effective
as minimizing the trace of $X^*AX$ under the orthonormality constraint.
Therefore, in practical implementations, we perform RR periodically,
even though PPCG has been observed to converge without performing this step for some problems.     
This is done in step 14 of Algorithm~\ref{alg:ppcg0}. 

Another reason periodic RR may be advantageous is 
that it provides an opportunity to lock converged eigenvectors 
and reduce the number of sparse matrix vector multiplications
required to find the remaining unconverged eigenvectors.

Clearly, introducing the periodic RR calculation
increases the cost of some PPCG iterations and can potentially 
make the algorithm less scalable due to the lack of scalability 
of the dense eigensolver. However, the extra cost can be
offset by accelerated convergence of the algorithm and reduced number of sparse 
matrix vector multiplications once some approximate eigenvectors
have converged.
In our PPCG implementation, 
we control the frequency of the RR calls by a parameter 
\textit{rr\_period}.  Our numerical experiments suggest that a 
good value of \textit{rr\_period} is between 5 and 10.
Because good approximations to desired eigenvectors do not 
emerge in the first few PPCG iteration, \textit{rr\_period} 
can be set to a relatively large value and then 
decreased in later iterations when many converged eigenvector
approximations can be found and locked.


%

\subsection{Locking converged eigenvectors} \label{sec:deflation}
Even before the norm of the subspace residual $R \equiv AX - X(X^{\ast} A X)$ 
becomes small, some of the Ritz vectors associated with the subspace 
spanned by columns of $X$ can become accurate. However, these Ritz 
vectors generally do not reveal themselves through the norm of each 
column of $R$ because each column of $X$ may consist of a linear
combination of converged and unconverged eigenvector approximations.
The converged eigenvector approximations 
can only be revealed through the RR procedure.
As mentioned earlier, by performing an RR calculation, we rotate 
the columns of $X$ to the Ritz vectors which are 
used as
starting points for independent Rayleigh quotient minimization carried out
in the inner loop of the next PPCG iteration.

Once the converged Ritz vectors are detected, we lock these 
vectors by keeping them in the leading columns of $X$.
These locked vectors are not updated in inner loop of the PPCG 
algorithm until the next RR procedure is performed.  We do not 
need to keep the corresponding columns in the $W$ and $P$ matrices. 
However, the remaining columns in $W$ and $P$ must be
orthogonalized against all columns of $X$. This ``soft locking'' strategy is along the lines of that described in~\cite{Kn.Ar.La.Ov:07}. 


%
A detailed description of the PPCG algorithm, incorporating the above practical aspects, is given in Algorithm~\ref{alg:ppcg} of Appendix A.

\section{Numerical examples}\label{sec:example}
In this section, we give a few examples of how the PPCG algorithm
can be used to accelerate both the SCF iteration and the band structure
analysis in Kohn-Sham DFT calculations. We implement PPCG within 
the widely used Quantum Espresso (QE) planewave pseudopotential density functional electronic structure code~\cite{QE-2009}, and compare the performance 
of PPCG with that of the state-of-the-art Davidson solver implemented in QE. The QE code 
also contains an implementation of a band-by-band conjugate 
gradient solver.  Its performance generally lags behind that of 
the Davidson solver, however, especially when a large number of eigenpairs 
is needed. Therefore, we compare to the Davidson solver here.

In QE, the Davidson algorithm can construct a subspace $Y$ of dimension
up to $4k$ before it is restarted.  However, when the number of desired 
eigenpairs $k$ is large, solving a $4k \times 4k$ projected eigenvalue 
problem is very costly. Therefore, in our tests, we limit the subspace dimension 
of $Y$ to $2k$. 

The problems that we use to test the new algorithm are listed in 
Table~\ref{tab:probs}. A sufficiently large supercell is used 
in each case so that we perform all calculations at the $\Gamma$-point
only.  The kinetic energy cutoff (ecut), which determines the 
the number of planewave coefficients ($n_G$), as well as the 
number of atoms ($n_a$) for each system are shown. 
Norm conserving pseudopotentials are used to construct the
Kohn-Sham Hamiltonian. 
Therefore, all eigenvalue problems we solve
here are standard eigenvalue problems, although our algorithm can be
easily modified to solve generalized eigenvalue problems.
The local density approximation (LDA)~\cite{Kohn.Sham:65} is used for 
exchange and correlation. The particular choice of  
pseudopotential and exchange-correlation functional is not 
important here, however, since we focus only on the performance of the 
eigensolver.   

The distribution of eigenvalues for each problem 
is plotted in Figure~\ref{fig:dos}. We can see that there are
several clusters of eigenvalues for Li318 and bulk Si.
They are insulating and semiconducting, respectively.
No visible spectral gap can be observed for Graphene512. It is known 
to be a metallic system. The test cases thus encompass the full range of electronic structures from insulating to metallic.
\begin{table}[htbp]
\begin{center}
\begin{tabular}{|c|c|c|c|}
\hline 
Problem    & $n_{a}$ & ecut (Ryd) & $n_G$        \\ \hline 
 Li318     &  318  &   25         &  206,691     \\ \hline 
 Graphene  &  512  &   25         &  538,034     \\ \hline 
 bulk Si &  1000 &   35         &  1,896,173   \\ \hline 
\end{tabular}
\caption{Test problems}
\label{tab:probs}
\end{center}
\end{table}
%

\begin{figure}[htbp]
\centering
\begin{subfigure}{.33\textwidth}\includegraphics[width=\textwidth]{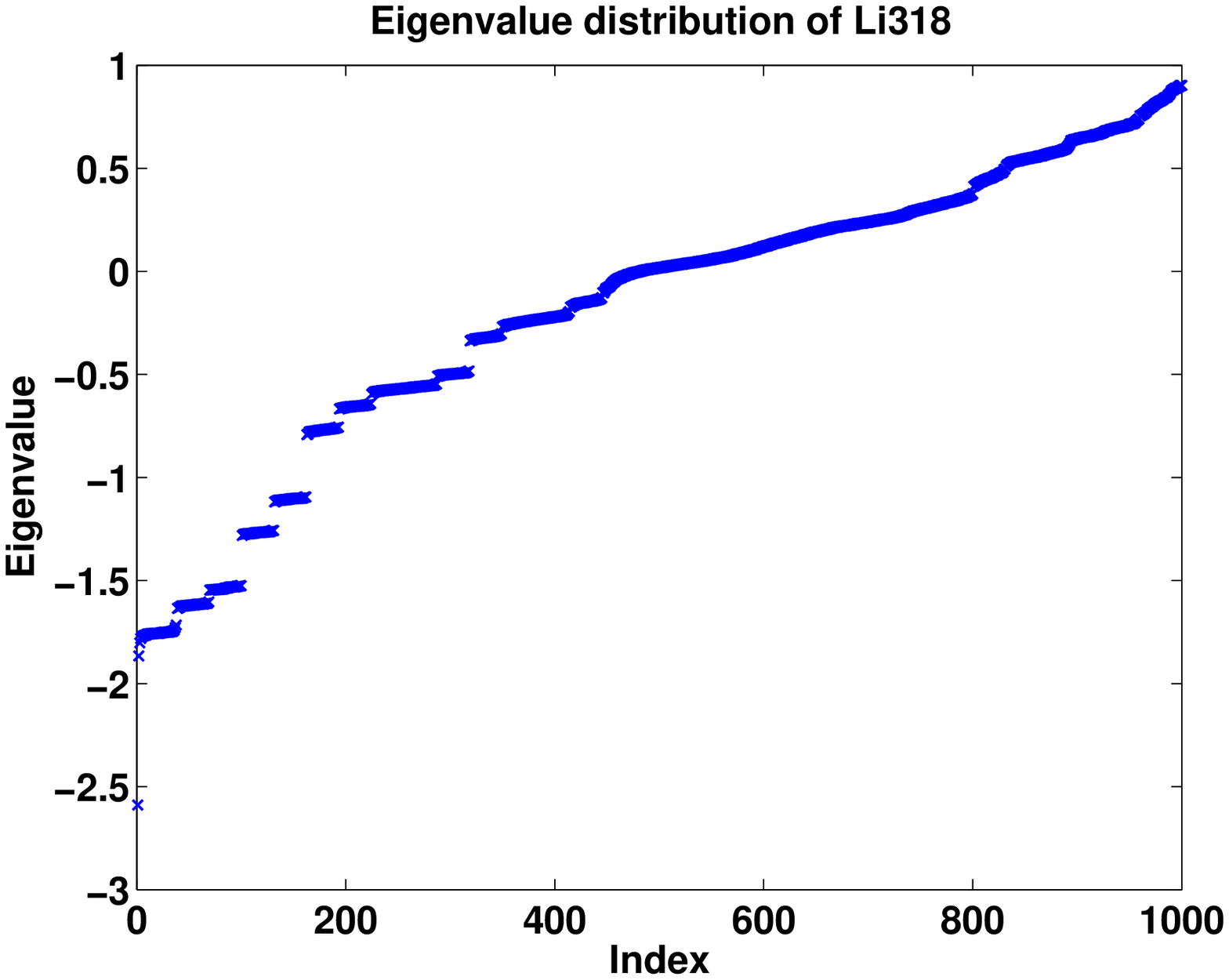}
\caption{Li318}\end{subfigure}\hfill
\begin{subfigure}{.33\textwidth}\includegraphics[width=\textwidth]{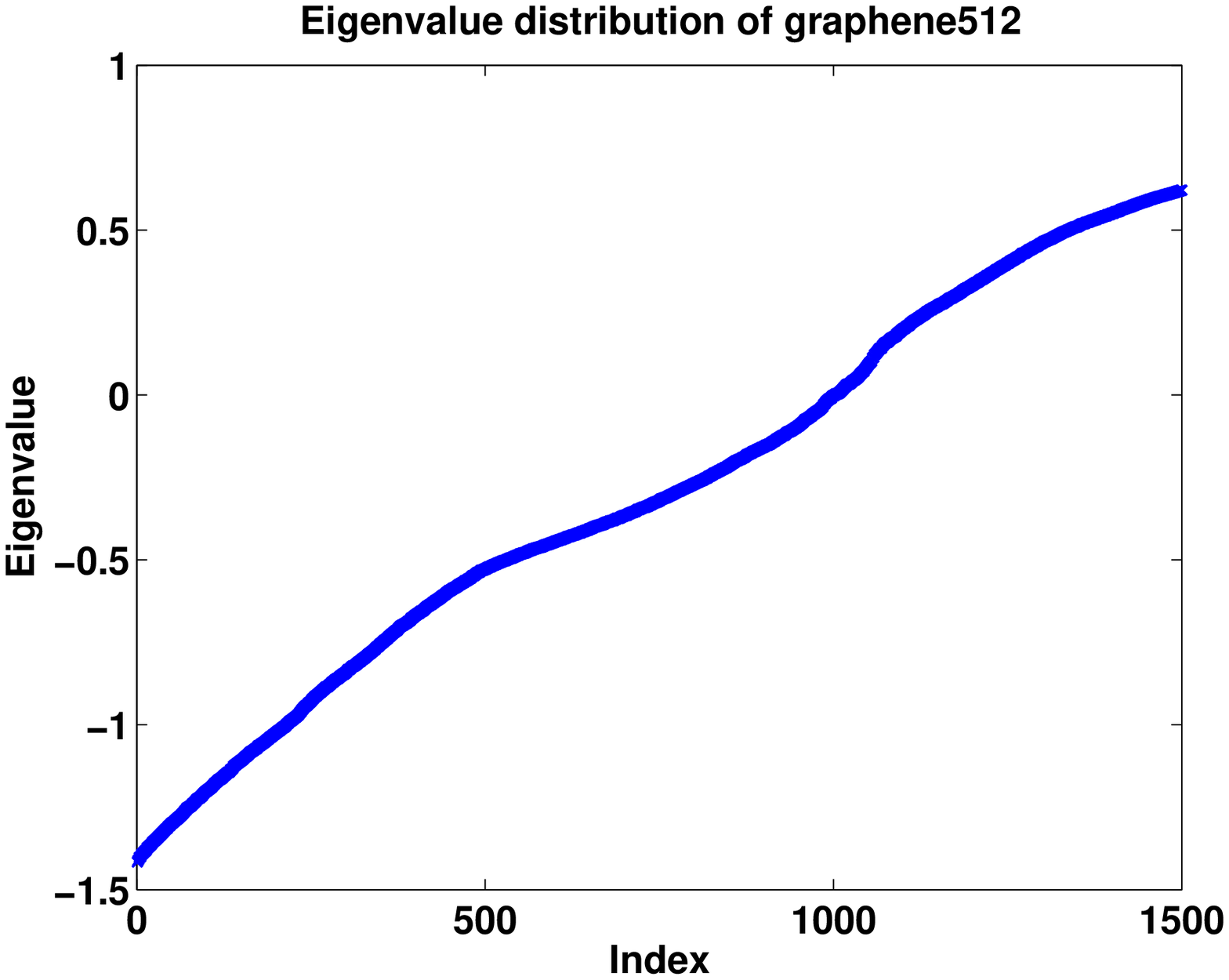}
\caption{graphene512}\end{subfigure}\hfill
\begin{subfigure}{.33\textwidth}\includegraphics[width=\textwidth]{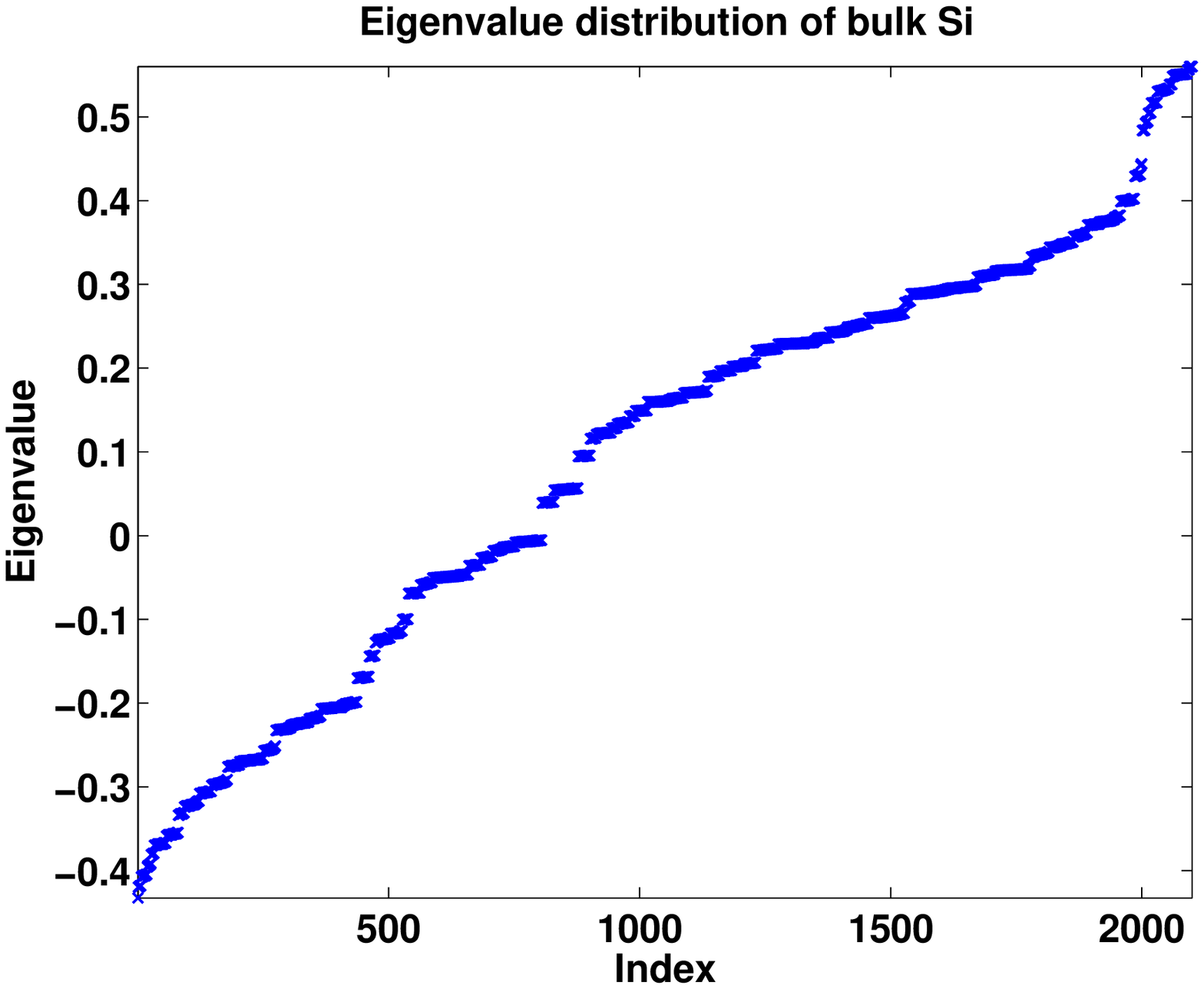}
\caption{bulk Si}\end{subfigure}
\caption{Eigenvalue distributions for test problems.}
\label{fig:dos}
\end{figure}


All tests were performed on Edison,
a Cray XC30 supercomputer maintained at the National Energy Research
Scientific Computer Center (NERSC) at Lawrence Berkeley National Laboratory.
Each node on Edison has two twelve-core Intel 2.4GHz 
``Ivy Bridge" processor sockets. It is equipped with 
64 gigabyte (GB) DDR3 1600MHz shared memory. However, 
memory access bandwidth and latency are nonuniform across all cores.  
Each core has its own 64 kilobytes 
(KB) L1 and 256 KB L2 caches. A 32MB L3 cache is shared among 
12 cores. Edison nodes are connected by a Cray Aries network with 
Dragonfly topology and 23.7 TB/s global bandwidth.

We follow the same parallelization strategy implemented in the QE 
package to perform the multiplication of the Hamiltonian and 
a wavefunction and distributed dense matrix--matrix multiplications.
The most expensive part of the Hamiltonian and wavefunction 
multiplication is the three-dimensional
FFTs. Each FFT is parallelized over $nz$ cores, where $nz$ is the
number of FFT grid points in the third dimension. Multiple FFTs 
can be carried out simultaneously on different cores when the 
``-ntg" option is used.

We do not use the multi-threaded feature of QE.
The planewave coefficients are partitioned and distributed by
rows. Therefore, the dense matrix--matrix multiplications are 
carried out by calling the DGEMM subroutine in BLAS3 on 
each processor and performing a global sum if necessarily.
The ScaLAPACK library is used to solve the dense projected 
eigenvalue problem and to perform the Cholesky factorization
required in the Cholesky QR factorization.  Because 
ScaLAPACK requires a 2D square processor grid for these
computations, a separate communication group that typically 
consists of fewer computational cores is created to 
complete this part of the computation. 
Table~\ref{tab:sqgrid} gives the default
square processor configurations generated by QE when a certain
number of cores are used to solve the Kohn-Sham problem.
Although we did not try different configurations exhaustively,
we found the default setting to be close to optimal, i.e., 
adding more cores to perform ScaLAPACK calculations generally 
does not lead to any improvement in timing because of the 
limited amount of parallelism in dense eigenvalue and
Cholesky factorization computations and the communication
overhead.
{\small
\begin{table}[htbp]
\begin{center}
\begin{tabular}{|c|c|}
\hline 
ncpus & Processor grid\tabularnewline
\hline 
\hline 
200 & 10x10\tabularnewline
\hline 
400 & 14x14\tabularnewline
\hline 
800 & 20x20\tabularnewline
\hline 
1,600 & 28x28\tabularnewline
\hline 
2,400 & 34x34\tabularnewline
\hline 
3,000 & 38x38\tabularnewline
\hline 
\end{tabular}
\caption{Default ScaLAPACK processor grid configurations used by QE 
for different total core counts.}
\label{tab:sqgrid}
\end{center}
\end{table}
}

\subsection{Band energy calculation}
We first show how PPCG performs relative to the Davidson algorithm
when they are used to solve a single linear eigenvalue problem
defined by converged electron density and its corresponding 
Kohn-Sham Hamiltonian. Table~\ref{tab:bandperf} shows
the total wall clock time required by both the block Davidson
algorithm and the PPCG algorithm for computing the $k$ lowest eigenpairs
of a converged Kohn-Sham Hamiltonian. This is often known as the
band structure calculation, although we only compute band energies 
and corresponding wavefunctions at the $\Gamma$-point of the Brillouin zone.

In all these calculations, we perform the RR procedure 
every 5 iterations. Depending on the problem, the subblock
size \textit{sbsize} is chosen to be 5 or 50. In our experience, such 
choice of \textit{sbsize} leads to satisfactory convergence behavior of the 
PPCG algorithm (we address this question in more detail below).   
For all tests, the number of buffer vectors \textit{nbuf} is set to 50.

{\small
\begin{table}
\begin{center}
\begin{tabular}{|c|c|c|c|c|c|}
\hline 
Problem & ncpus & $k$ & sbsize & Time PPCG & Time Davidson\tabularnewline
\hline 
\hline 
Li318        & 480  & 2,062 & 5 &  49 (43)  & 84 (27)  \tabularnewline
\hline 
Graphene512  & 576 & 2,254 & 50 & 97 (39)  & 144 (36) \tabularnewline
\hline 
bulk Si     & 2,400  & 2,550 & 50 & 189 (78)  & 329 (77) \tabularnewline
\hline 
\end{tabular}
\caption{Comparison of the total wall clock time (in seconds) used by PPCG and Davidson 
to compute the lowest $k$ eigenpairs. Numbers in parentheses correspond to iteration counts.}
\label{tab:bandperf}
\end{center}
\end{table}
}

For both Li318 and Graphene512, we terminate the Davidson iteration
when the relative subspace residual norm defined in~\eqref{eq:resnrm} 
is less than $tol = 10^{-2}$. Since residual norms are only calculated
when the Davidson iteration is restarted, the actual residual norm
associated with the approximate solution produced by the Davidson algorithm
may be much less than $tol$ upon termination.  We use that relative residual
norm as the stopping criterion for the PPCG algorithm. 
For bulk Si, we set $tol$ to $10^{-3}$.
 
The results shown in Table~\ref{tab:bandperf} indicate that PPCG performs
much better on the test problems than Davidson's method. We observe almost 
a factor of two speedup in terms of wall clock time.
Note that the number of iterations required by PPCG is noticeably higher 
than that of Davidson's method in some cases (e.g., Li318). However, most 
PPCG iterations are much less expensive than Davidson iterations 
because they are free of RR calculations. The reduced number of RR 
calculations leads to better overall performance. As expected, 
as the \textit{sbsize} value increases, the difference in total number of 
outer iterations between PPCG and Davidson becomes smaller.
For example, for bulk Si and Graphene512, where \textit{sbsize} is relatively large ($50$), the number of iterations taken by PPCG and Davidson are 
almost the same.  
 
\begin{figure}[htbp]
\centering
\begin{subfigure}{.33\textwidth}\includegraphics[width=\textwidth]{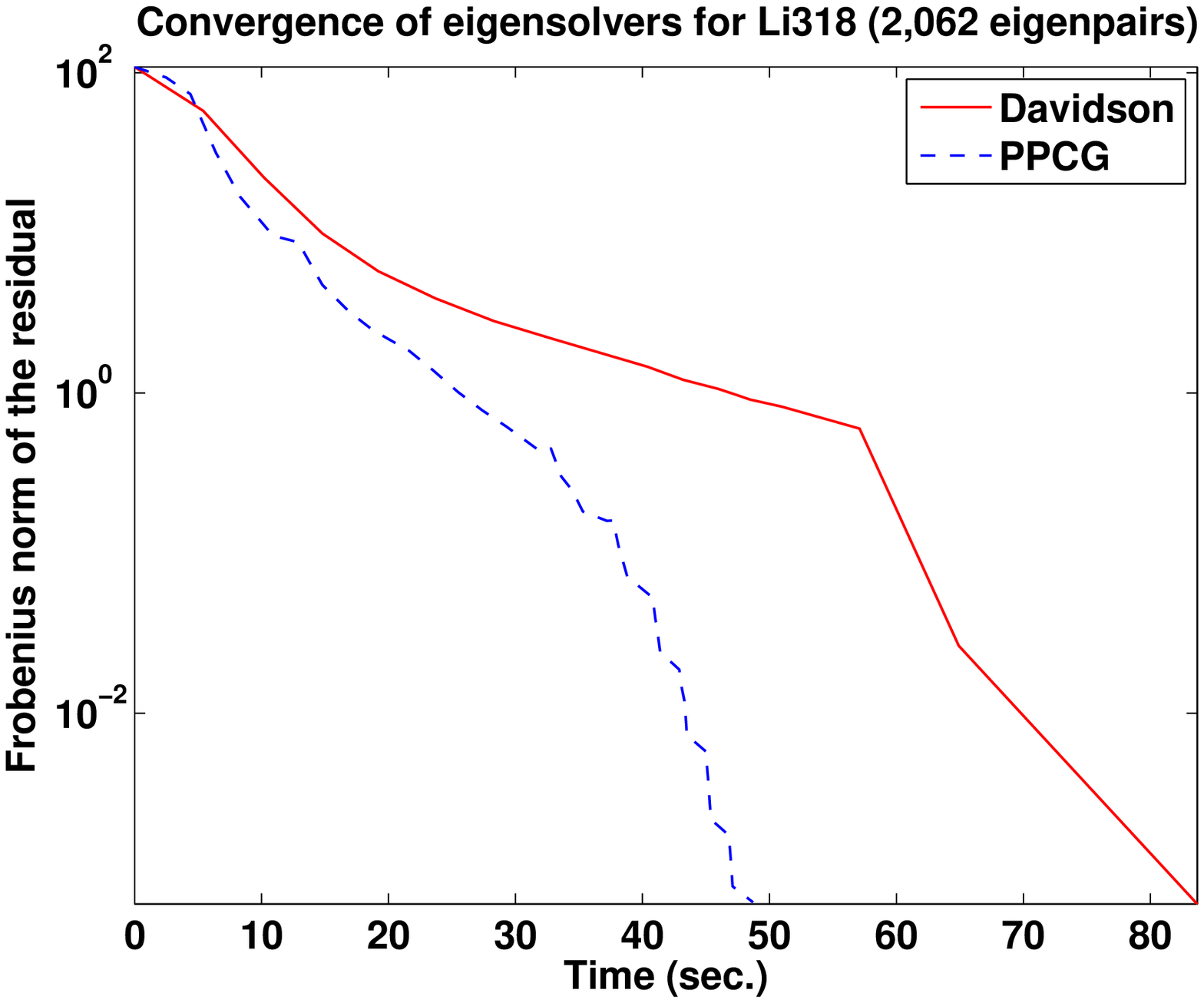}
\caption{Li318 ($k = 2062$)}\end{subfigure}\hfill
\begin{subfigure}{.33\textwidth}\includegraphics[width=\textwidth]{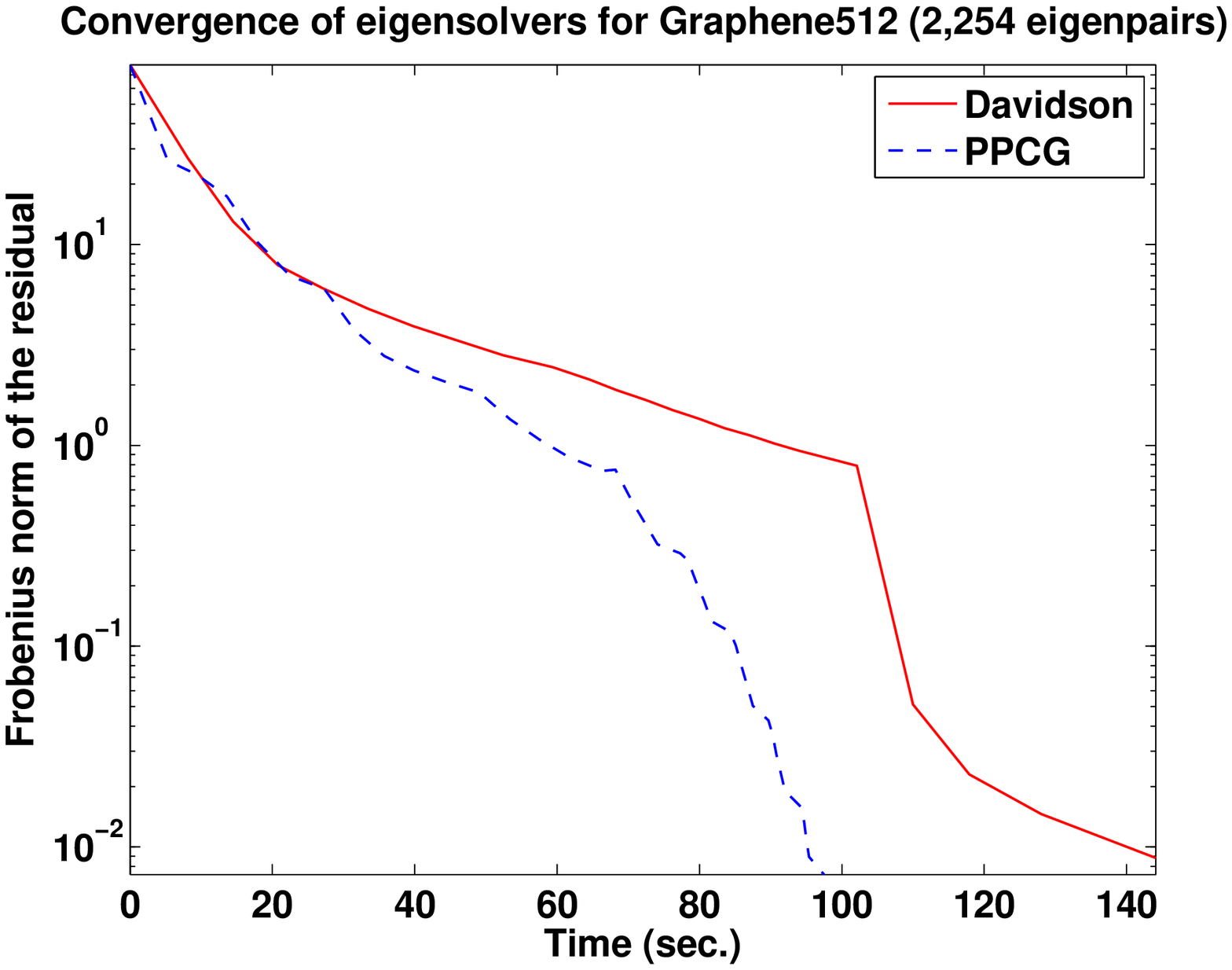}
\caption{Graphene512 ($k = 2254$)}\end{subfigure}\hfill
\begin{subfigure}{.33\textwidth}\includegraphics[width=\textwidth]{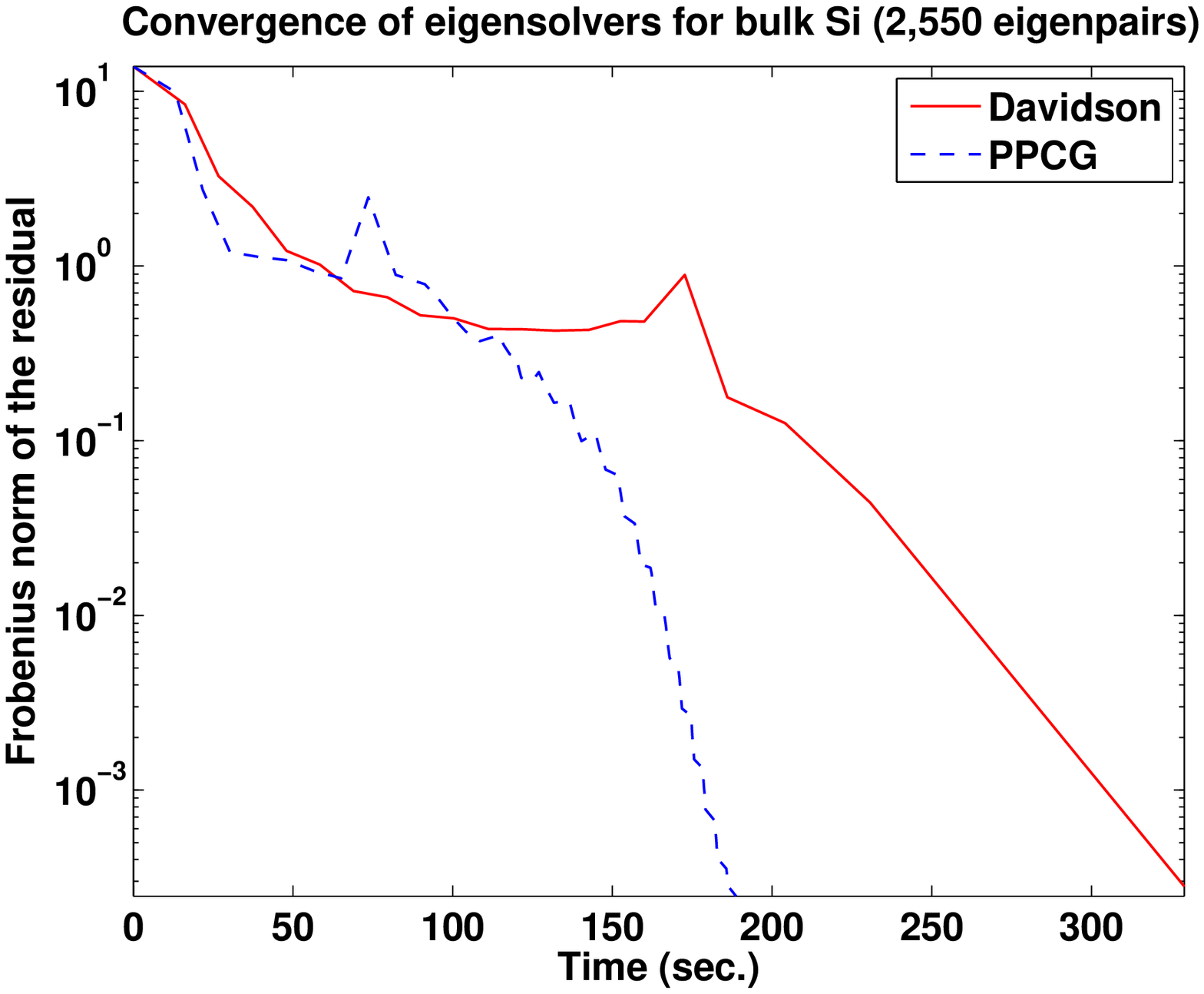}
\caption{bulk Si ($k = 2550$)}\end{subfigure}
\caption{Convergence of the Davidson and PPCG algorithms for band structure calculation.}
\label{fig:Liband}
\end{figure}

Figure~\ref{fig:Liband} shows how the Frobenius norms of the subspace
residuals change with respect to the elapsed time for
all three test problems reported in Table~\ref{tab:bandperf}.
Note that at some point, both PPCG and the Davidson method start to 
converge more rapidly. The change in convergence rate is the result of 
locking the converged eigenpairs, which significantly reduces 
computational cost in performing $AX$.  

In Tables~\ref{tab:Li318prof} and~\ref{tab:Siprof}, we provide a more 
detailed timing breakdown for both PPCG and Davidson algorithms when 
they are used to solve Li318 and bulk Si, respectively. We can clearly see that the wall clock time 
consumed by the Davidson run is dominated by RR calculations.
The RR cost is significantly lower in PPCG. However, such 
a reduction in RR cost is slightly offset by the additional cost 
of performing Cholesky QR, which we enable in each PPCG iteration. 
Its cost represents roughly 15\% of the total.
As has been discussed in Section~\ref{subsec:qr}, the number of these factorizations can, however, be further reduced, which
will lead to even more efficient PPCG.

{\small
\begin{table}[htbp]
\begin{center}
\begin{tabular}{|c|c|c|}
\hline 
Computation  & PPCG & Davidson \tabularnewline
\hline 
\hline 
 GEMM   & 16 &  11  \tabularnewline
\hline 
  $AX$  & 10  & 6 \tabularnewline
\hline 
  RR    & 13 &  66  \tabularnewline
\hline 
 CholQR & 8  &  0  \tabularnewline
\hline 
\end{tabular}
\caption{Timing profiles (in seconds) for PPCG and Davidson
when they are used to compute the 2,062 lowest eigenpairs of
the Li318 problem on 480 cores.}
\label{tab:Li318prof}
\end{center}
\end{table}
}

{\small
\begin{table}[htbp]
\begin{center}
\begin{tabular}{|c|c|c|}
\hline 
Computation  & PPCG & Davidson \tabularnewline
\hline 
\hline 
 GEMM   & 27 &  41  \tabularnewline
\hline 
  $AX$  & 94  & 96 \tabularnewline
\hline 
  RR    & 40 &  191  \tabularnewline
\hline 
 CholQR & 19  &  0  \tabularnewline
\hline 
\end{tabular}
\caption{Timing profiles (in seconds) for PPCG and Davidson
when they are used to compute the 2,550 lowest eigenpairs of
the bulk Si problem on 2,400 cores.}
\label{tab:Siprof}
\end{center}
\end{table}
}


We also note from Table~\ref{tab:Li318prof} that PPCG may spend more time in
performing dense matrix--matrix multiplications (GEMM) required to orthogonalize $W$ and $P$ against the current approximation to the desired invariant subspace 
than the Davidson algorithm.
We believe the relatively high cost of GEMM operations is due to 
the 1D decomposition of the planewave coefficient matrix used in QE, which 
is less than optimal for machines with many processors.
The performance of GEMM depends on the size of the matrices being 
multiplied on each processor.  In the case of Li318, the dimension 
of the local distributed $X$ is $720 \times 2062$, which does not lead 
to optimal single-processor GEMM performance when $X^*X$ or 
similar matrix--matrix multiplications are computed.
For bulk Si, the dimension of the local distributed 
$X$ is $2024 \times 2550$, which is nearly a perfect square.  The 
optimized BLAS on Edison is highly efficient for matrices of this
size. 

It also appears that PPCG can spend more time in performing $AX$ than 
the Davidson method.  The higher $AX$ cost in PPCG 
can be attributed to its delayed locking of converged eigenpairs.  
Because PPCG performs RR periodically, locking must also be performed
periodically even though many eigenpairs may have converged before
the next RR procedure is called.  Although we use a constant
RR frequency value \textit{rr\_period}, it is possible to choose 
it dynamically.  In the first few PPCG iterations in which
the number of converged eigenvectors is expected to be low, we should
not perform the RR procedure too frequently, in order to reduce the RR cost.
However, when a large number of eigenvectors start to converge, it may be 
beneficial to perform the RR procedure more frequently to lock the 
converged eigenvectors as soon as they appear, and hence reduce the number of 
sparse matrix multiplications.  In our tests, we observe that 
setting \textit{rr\_period} to a value between 5 and 10 typically yields 
satisfactory performance.
  
\begin{figure}[htbp]
\centering
\includegraphics[width=6cm]{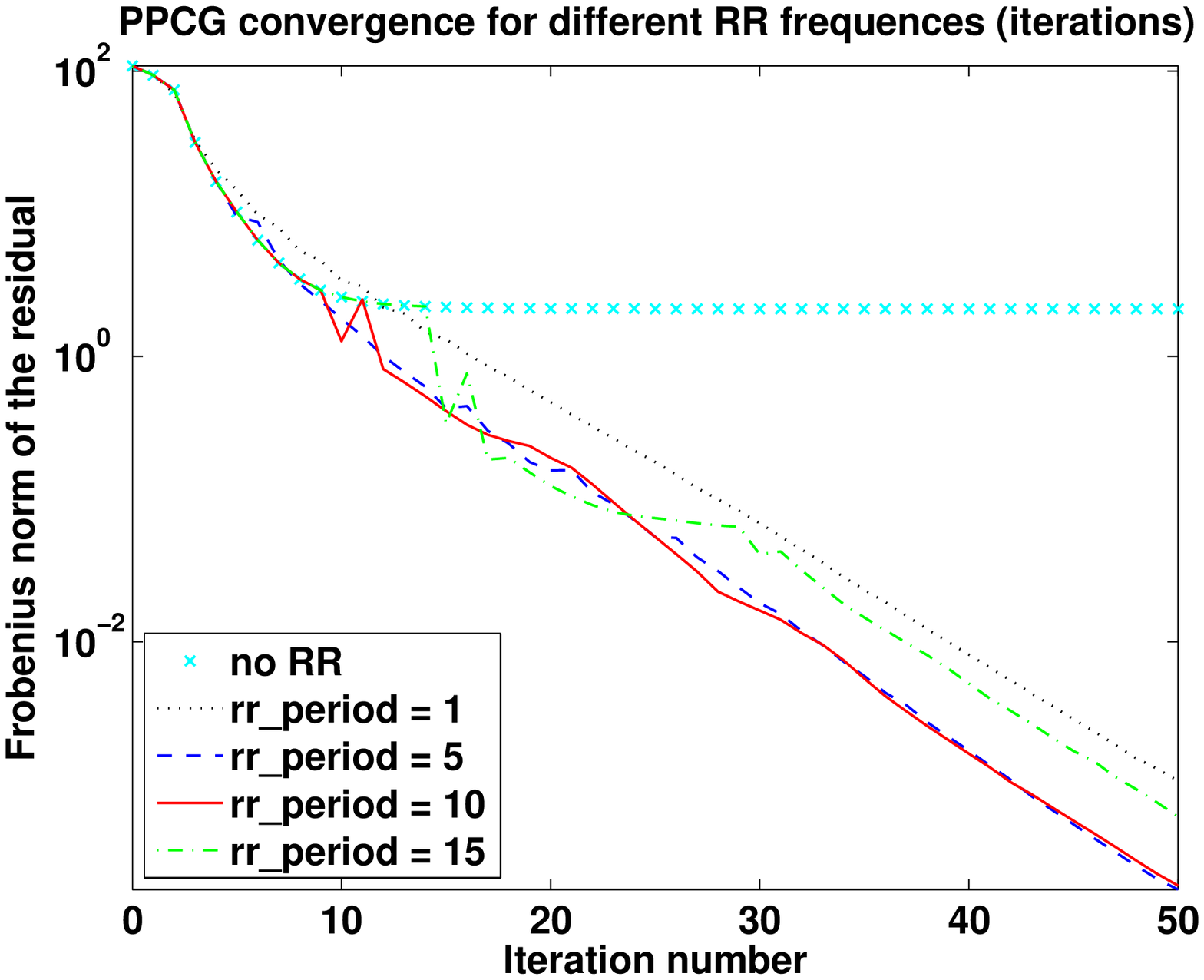}
\includegraphics[width=6cm]{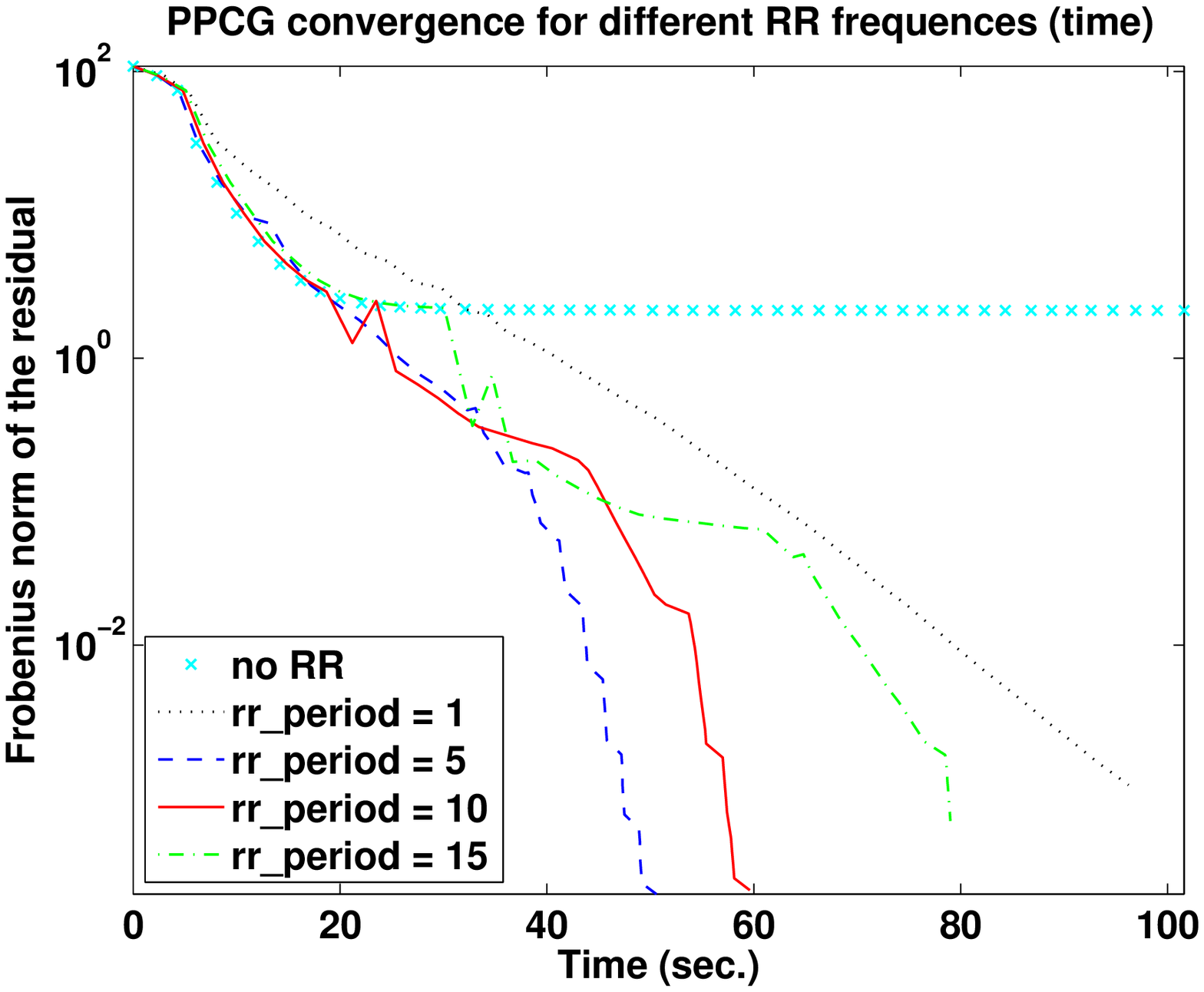}
\caption{Effect of RR frequency on the convergence of 
PPCG for the Li318 problem.}
\label{fig:rreffects}
\end{figure}
  
In Figure~\ref{fig:rreffects}, we demonstrate this finding by considering the effects of the RR frequency on the convergence of PPCG in terms of 
iteration count (left) and time (right) for the Li318 system. We can see that calling
the RR procedure periodically is crucial for reaching convergence. 
Completely removing the RR calculation generally leads to a (near) stagnation 
of the algorithm. At the same time, it can be seen from 
Figure~\ref{fig:rreffects} (left), that performing the RR procedure too 
frequently does not necessarily accelerate PPCG convergence. For this 
particular example, performing RR calculations every 15 iterations in fact results 
in essentially the same convergence rate as that observed in 
another PPCG run in which the RR computation is performed at every step.   

The effect of the RR frequency becomes more clear when we examine
the change of residual norm with respect to the wall clock time. 
As shown in Figure~\ref{fig:rreffects} (right),
the best \textit{rr\_period} value for Li318 is 5, i.e., invoking the 
RR procedure every five steps achieves a good balance between 
timely locking and reduction of RR computations. As mentioned above, in principle, one can vary the \textit{rr\_period} values during the solution process. 
As a heuristic, \textit{rr\_period} can be set to a relatively large 
number in the first several iterations, and then be gradually reduced 
to provide more opportunities for locking converged eigenvectors. 
We tried such a strategy for Li318 but did not observe significant 
improvement. Therefore, we 
leave \textit{rr\_period} at $5$ in all runs.

\begin{figure}[htbp]
\centering
\includegraphics[width=6cm]{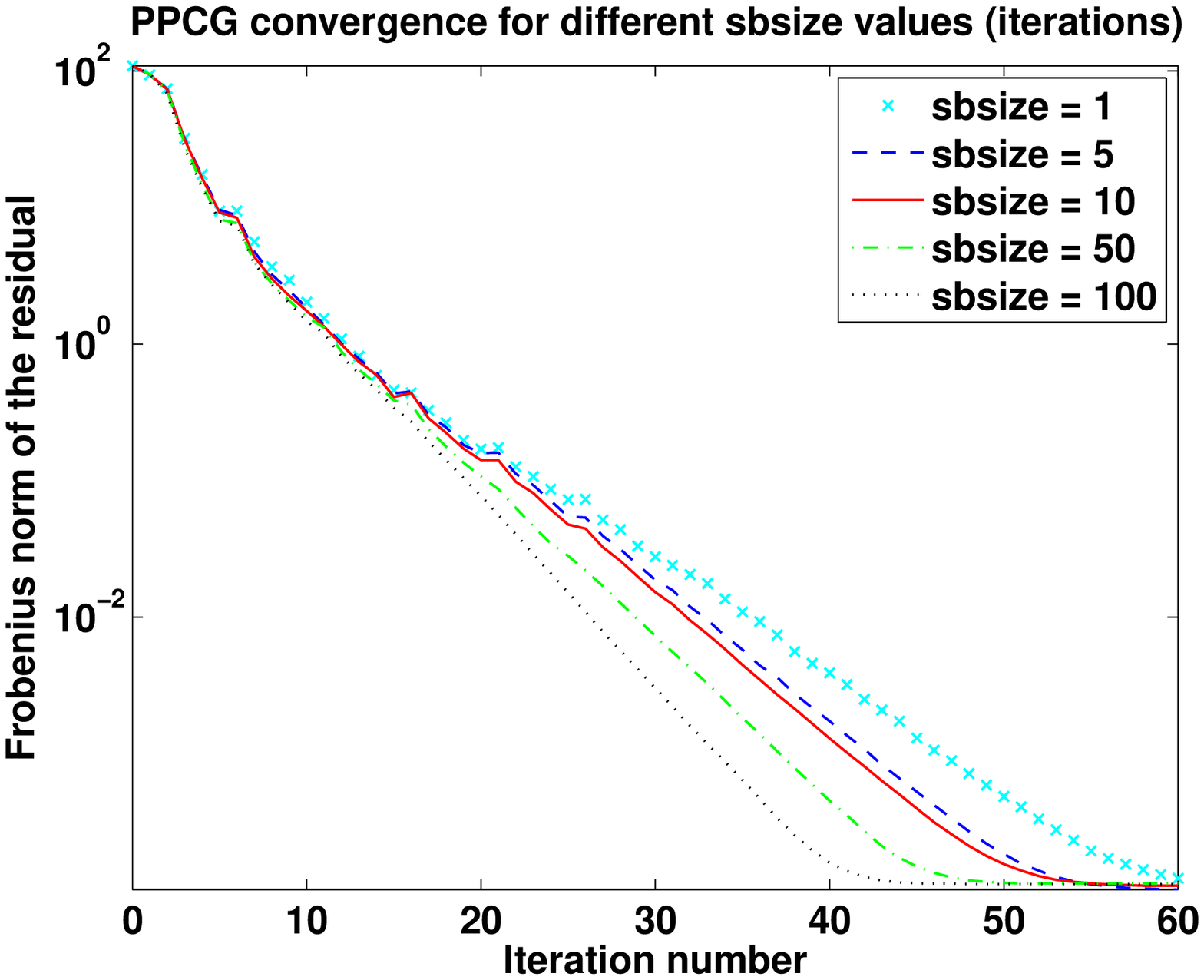} 
\includegraphics[width=6cm]{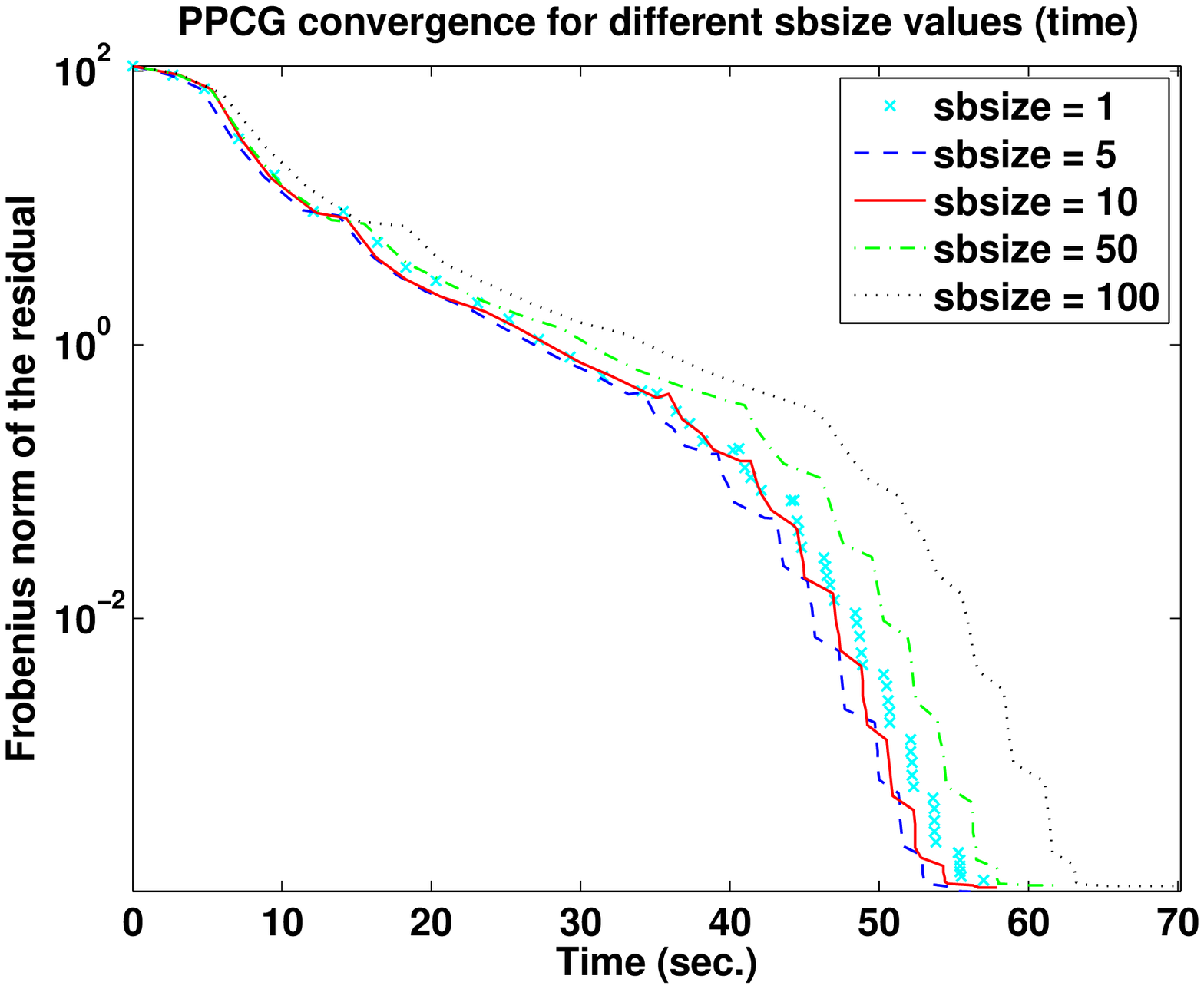}
\caption{Effect of block size on the convergence of 
PPCG for the Li318 problem.}
\label{fig:blocksize}
\end{figure}

Figure~\ref{fig:blocksize} shows the effect of the block size \textit{sbsize} on PPCG convergence for the Li318 problem.
One can see that increasing the \textit{sbsize} value results in a smaller 
number of iterations required to achieve the desired tolerance 
(Figure~\ref{fig:blocksize}, left). This behavior is expected, because in the limiting case where 
\textit{sbsize}$ = k$, PPCG becomes the LOBPCG method, which 
optimizes in the full $3k \times 3k$ subspace in each iteration.

Figure~\ref{fig:blocksize} (right) demonstrates the effect of 
\textit{sbsize} on the solution time. Remarkably, a larger 
block size, which leads to a reduced iteration count, does not 
necessarily result in better overall performance, even though
it tends to reduce outer iterations. This is in
part due to the sequential implementation of the {\tt for} loop in 
step 11 in the current implementation. Since each block minimization
in the inner loop can take a non-negligible amount of time, 
the inner {\tt for} loop can take a significant amount 
of time, even though the loop count is reduced.
On the other hand, setting \textit{sbsize}$=1$ is not desirable either, 
because of that tends to slow convergence and increase the outer PPCG iteration 
count. Furthermore, the inner minimization cannot effectively 
take advantage of BLAS3 operations in this case.  For Li318, we observe that the 
best \textit{sbsize} value is $5$.          


In Figure~\ref{fig:pscale}, we plot how the wall clock times of PPCG and  
Davidson change with respect to the number of cores when 
applied to the Li318 and bulk Si problems.
\begin{figure}[htbp]
\centering
\includegraphics[width=6cm]{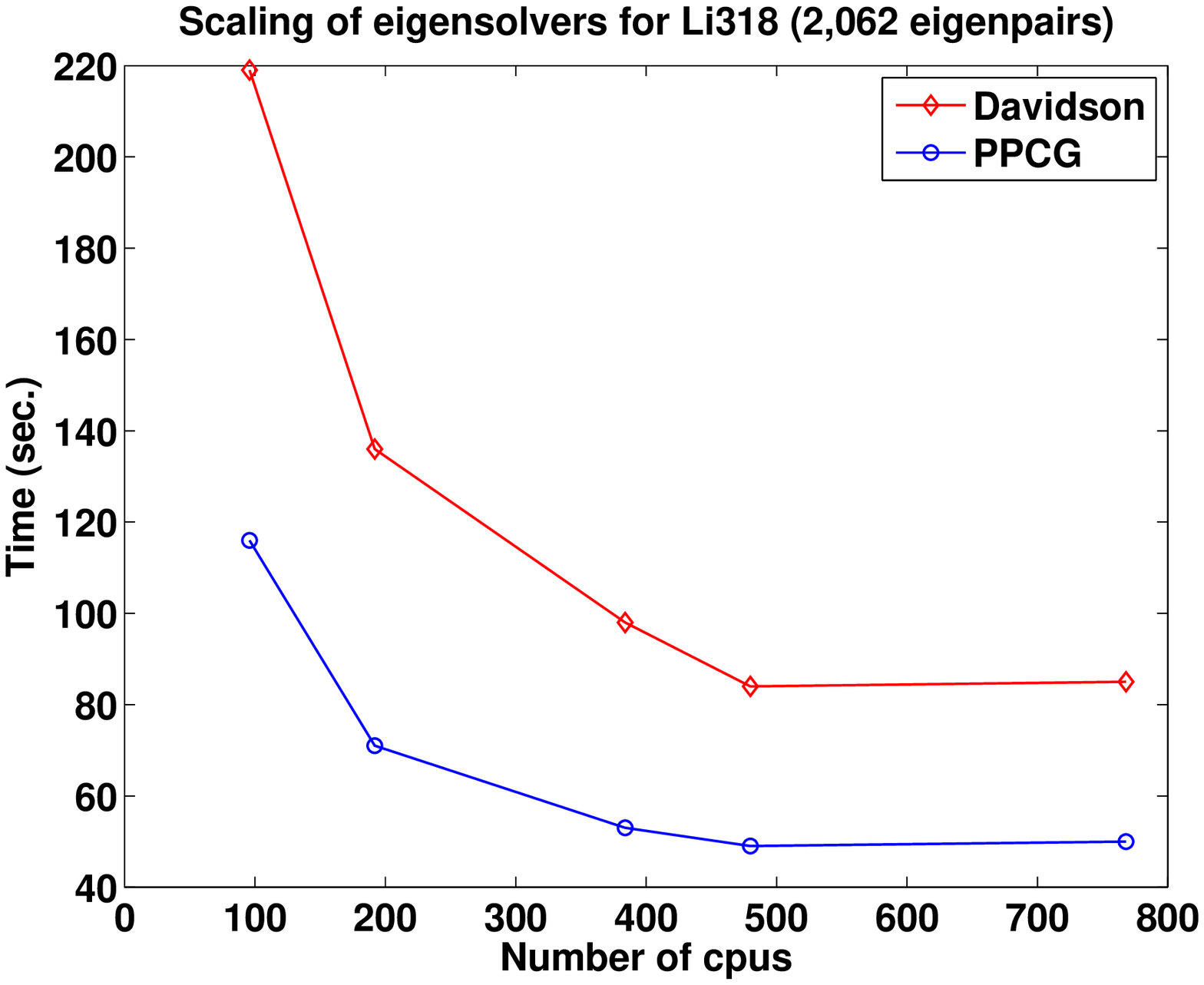}
\includegraphics[width=6cm]{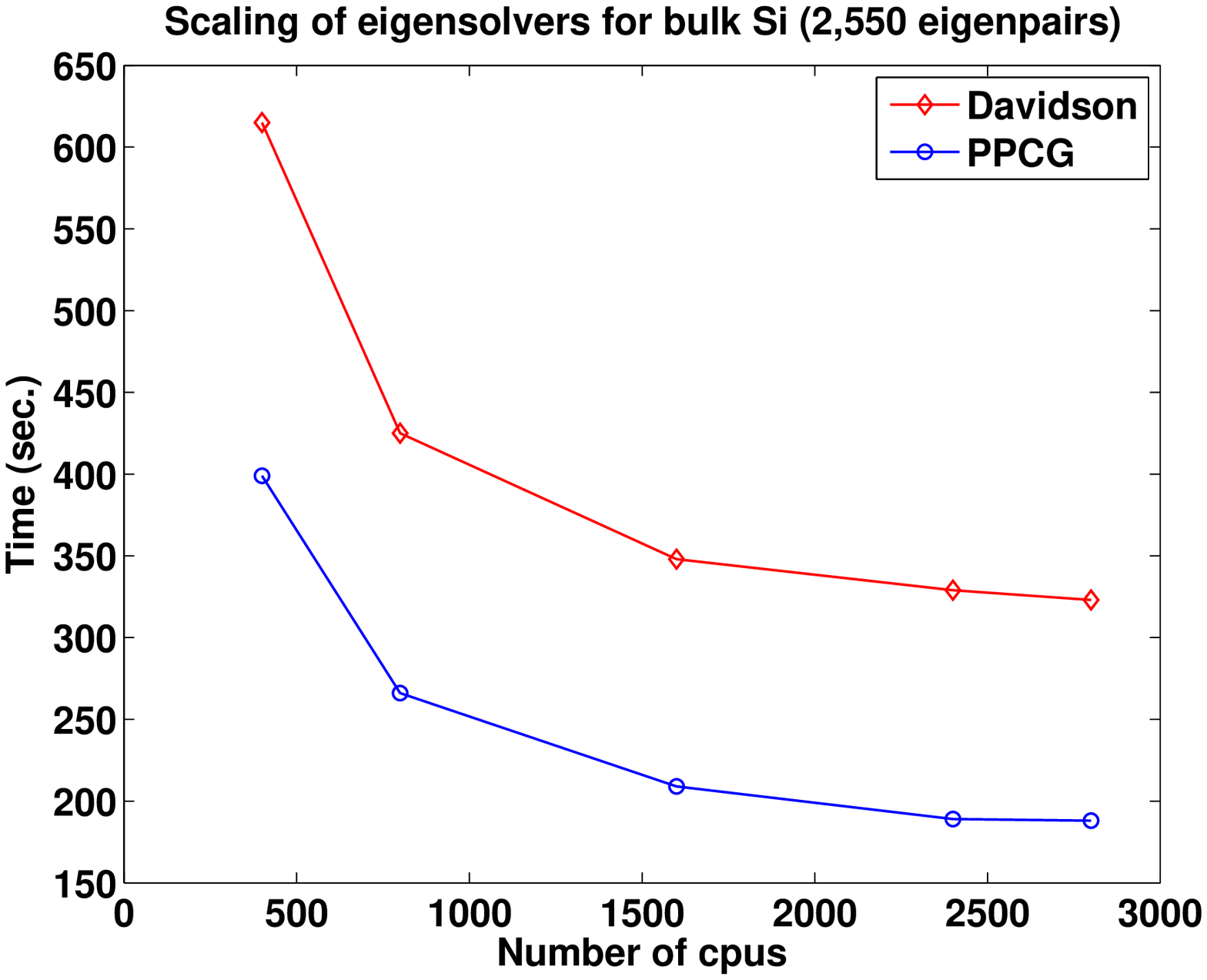}
\caption{Scaling of the Davidson and PPCG algorithms
when used to compute $2,062$ and $2,550$ bands of 
the converged Kohn-Sham Hamiltonian of the Li318 (left) and bulk Si (right) systems, respectively.}
\label{fig:pscale}
\end{figure}

We observe that both algorithms exhibit nearly
perfect parallel scalability when a relatively small number
of cores are used in the computation. However, as the 
number of cores increases, the performance of both 
algorithms stagnates. 


{\small
\begin{table}[htbp]
\begin{center}
\begin{tabular}{|c||c|c|c|c|c|c|}
\cline{2-7} 
\multicolumn{1}{c|}{} & \multicolumn{6}{c|}{ncpus} \tabularnewline
\cline{1-7} 
\multicolumn{1}{|c|}{Computation} & 48 & 96 & 192 & 384 & 480 & 768 \tabularnewline
\hline 
\hline 
GEMM & 77 & 44 & 26 & 18 & 16 & 16  \tabularnewline
\hline 
$AX$ & 33 & 28 & 17 & 13 & 10 & 11  \tabularnewline
\hline 
RR & 40 & 24 & 16 & 13 & 13 & 13  \tabularnewline
\hline 
CholQR & 28 & 16 & 9 & 8 & 8 & 9 \tabularnewline
\hline 
Total & 186 & 116 & 71 & 53 & 49 & 50 \tabularnewline
\hline 
\end{tabular}
\caption{Scaling of different computational components of PPCG for Li318.}
\label{tab:li318_ppcg}
\end{center}
\end{table}
}

{\small
\begin{table}[htbp]
\begin{center}
\begin{tabular}{|c||c|c|c|c|c|c|}
\cline{2-7} 
\multicolumn{1}{c|}{} & \multicolumn{6}{c|}{ncpus} \tabularnewline
\cline{1-7} 
\multicolumn{1}{|c|}{Computation} & 48 & 96 & 192 & 384 & 480 & 768 \tabularnewline
\hline 
\hline 
GEMM & 54 & 29 & 17 & 12 & 11 & 11\tabularnewline
\hline 
$AX$ & 22 & 19 & 13 & 8 & 6 & 7\tabularnewline
\hline 
RR & 370 & 170 & 105 & 77 & 66 & 66\tabularnewline
\hline 
Total & 449 & 219 & 136 & 98 & 84 & 85\tabularnewline
\hline 
\end{tabular}
\caption{Scaling of different computational components of the Davidson algorithm for Li318.}
\label{tab:li318_dav}
\end{center}
\end{table}
}

A closer look at the timing profiles consisting of wall clock
time used by different computational kernels, as shown in 
Tables~\ref{tab:li318_ppcg}--\ref{tab:sicluster_dav}, 
reveals that the lack of scalability at high core count is 
caused by the poor parallel scaling of both $AX$ and $GEMM$ calculations
at such core counts. A similar picture is observed for other cases 
we have tested.

{\small
\begin{table}[htbp]
\begin{center}
\begin{tabular}{|c||c|c|c|c|c|c|}
\cline{2-7} 
\multicolumn{1}{c|}{} & \multicolumn{6}{c|}{ncpus} \tabularnewline
\cline{1-7} 
\multicolumn{1}{|c|}{Computation} & 200 & 400 & 800 & 1,600 & 2,400 & 2,800 \tabularnewline
\hline 
\hline 
GEMM & 202 & 104 & 57 & 35 & 27 & 26  \tabularnewline
\hline 
$AX$ & 247 & 165 & 129 & 106 & 94 & 92  \tabularnewline
\hline 
RR & 142 & 77 & 48 & 40 & 40 & 41  \tabularnewline
\hline 
CholQR & 66 & 91 & 21 & 18 & 19 & 21 \tabularnewline
\hline 
Total & 685 & 399 & 266 & 209 & 189 & 188 \tabularnewline
\hline 
\end{tabular}
\caption{Scaling of different computational components of PPCG for bulk Si.}
\label{tab:sicluster_ppcg}
\end{center}
\end{table}
}

{\small
\begin{table}[htbp]
\begin{center}
\begin{tabular}{|c||c|c|c|c|c|c|}
\cline{2-7} 
\multicolumn{1}{c|}{} & \multicolumn{6}{c|}{ncpus} \tabularnewline
\cline{1-7} 
\multicolumn{1}{|c|}{Computation} & 200 & 400 & 800 & 1,600 & 2,400 & 2,800 \tabularnewline
\hline 
\hline 
GEMM & 248 & 138 & 76 & 47 & 41 & 38  \tabularnewline
\hline 
$AX$ & 253 & 169 & 133 & 111 & 96 & 96  \tabularnewline
\hline 
RR & 474 & 303 & 214 & 189 & 191 & 189 \tabularnewline
\hline 
Total & 986 & 615 & 425 & 348 & 329 & 323 \tabularnewline
\hline 
\end{tabular}
\caption{Scaling of different computational components of Davidson's algorithm for bulk Si.}
\label{tab:sicluster_dav}
\end{center}
\end{table}
}

The less than satisfactory scalability of GEMM is likely due 
to the 1D partition of the planewave coefficients in QE.
We are aware of a recent change in the QE design to allow 
planewave coefficients to be distributed on a 2D
processor grid such as the one used in ABINIT~\cite{ABINIT1, ABINIT2, Bottin.Leroux.Knyazev.Zerah:08} and 
Qbox~\cite{QBOX}. However, the new version of the code is 
still in the experimental stage at the time of this writing.
Hence we have not tried it.  Once the new version of QE becomes
available, we believe the benefit of using PPCG to 
compute the desired eigenvectors 
will become even more substantial.

The poor scalability of $AX$ is due to the overhead related
to the all-to-all communication required in 3D FFTs. When a
small number of cores are used, this overhead is relatively
insignificant due to the relatively large ratio of 
computational work and communication volume.  However, when a 
large number of cores are used, the amount of computation performed 
on each core is relatively low compared to the volume of communication.
We believe that one way to reduce such overhead is to perform each
FFT on fewer than $n_z$ cores, where $n_z$ is the number of FFT grid
point in the third dimension. However, this would require
a substantial modification of the QE software.


\subsection{SCF calculation}
We ran both the block Davidson (Algorithm~\ref{alg:davidson}) and the 
new PPCG algorithm to compute the solutions to the Kohn-Sham equations 
for the three systems list in Table~\ref{tab:probs}. To account 
for partial occupancy at finite temperature, we set the number 
of bands to be computed to $k = 886$ for Li318; $k = 1,229$ 
for Graphene512; and $k = 2,000$ for bulk Si.

In general, it is not necessary to solve the linear
eigenvalue problem to high accuracy in the first few 
SCF cycles because the Hamiltonian itself has not converged.  
As the electron density and Hamiltonian 
converge to the ground state solution, we should gradually 
demand higher accuracy in the solution to the linear eigenvalue problem. 
However, because the approximate invariant subspace obtained
in the previous SCF iteration can be used as a good starting
guess for the eigenvalue problem produced in the current 
SCF iteration, the number of Davidson iterations required to 
reach high accuracy does not necessarily increase.
The QE implementation of Davidson's algorithm uses
a heuristic to dynamically adjust the convergence tolerance 
of the approximate eigenvalues as the electron density and Hamiltonian
converge to the ground-state solution. In most cases, the 
average number of Davidson iterations taken in each SCF cycle is
around 2.  We have not implemented the same heuristic for
setting a dynamic convergence tolerance partly because we do not 
always have approximate eigenvalues. To be comparable to 
the Davidson solver, we simply set the maximum number of
iterations allowed in PPCG to 2. In all our test cases, two
PPCG iterations were taken in each SCF cycle.

{\small
\begin{table}
\begin{center}
\begin{tabular}{|c|c|c|c|c|}
\hline 
Problem & ncpus & sbsize & PPCG & Davidson\tabularnewline
\hline 
\hline 
Li318 & 480 & 5 & 35 (40) & 85 (49)\tabularnewline
\hline 
Graphene512 & 576 & 10 & 103 (54) & 202 (57)\tabularnewline
\hline 
bulk Si & 2,000 & 5 & 218 (14) & 322 (14)\tabularnewline
\hline 
\end{tabular}
\caption{Comparison of total wall clock time (in seconds) used by PPCG and Davidson algorithms to compute solutions of the Kohn-Sham equations. 
Numbers in parentheses correspond to SCF iteration counts.}
\label{tab:bestperf_scf}
\end{center}
\end{table}
}

In Table~\ref{tab:bestperf_scf}, we report the overall time 
used in both the Davidson and PPCG versions of the 
SCF iteration for all three test problems. The SCF 
convergence tolerance, which is used to terminate the SCF
iteration when the estimated total energy error predicted by
the so-called Harris--Foulkes energy functional~\cite{Har85, FouHay89} 
is sufficiently small,
is set to $10^{-6}$ for Li318 and bulk Si.
It is set to $10^{-4}$ for Graphene512 because the SCF iteration
converges more slowly for this problem, hence takes much longer to run.
Note that the SCF convergence tolerance is defined internally by QE, which 
we did not modify. Thus both the Davidson and PPCG versions of the SCF
iteration are subject to the same SCF convergence criterion.
We also use the same (``plain'') mixing and finite temperature smearing in 
both the Davidson and PPCG runs.
Note that, following the discussion in section~\ref{subsec:qr},
we omit the Cholesky QR step in PPCG for the reported runs. Since only
two eigensolver iterations are performed per SCF iteration, this did not affect convergence and
resulted in a speedup of the overall computation. The \textit{sbsize} parameter has been set to $5$ for the Li318
and bulk Si systems, and to 10 for Graphene512.
\begin{figure}[htbp]
\centering
\begin{subfigure}{.33\textwidth}\includegraphics[width=\textwidth]{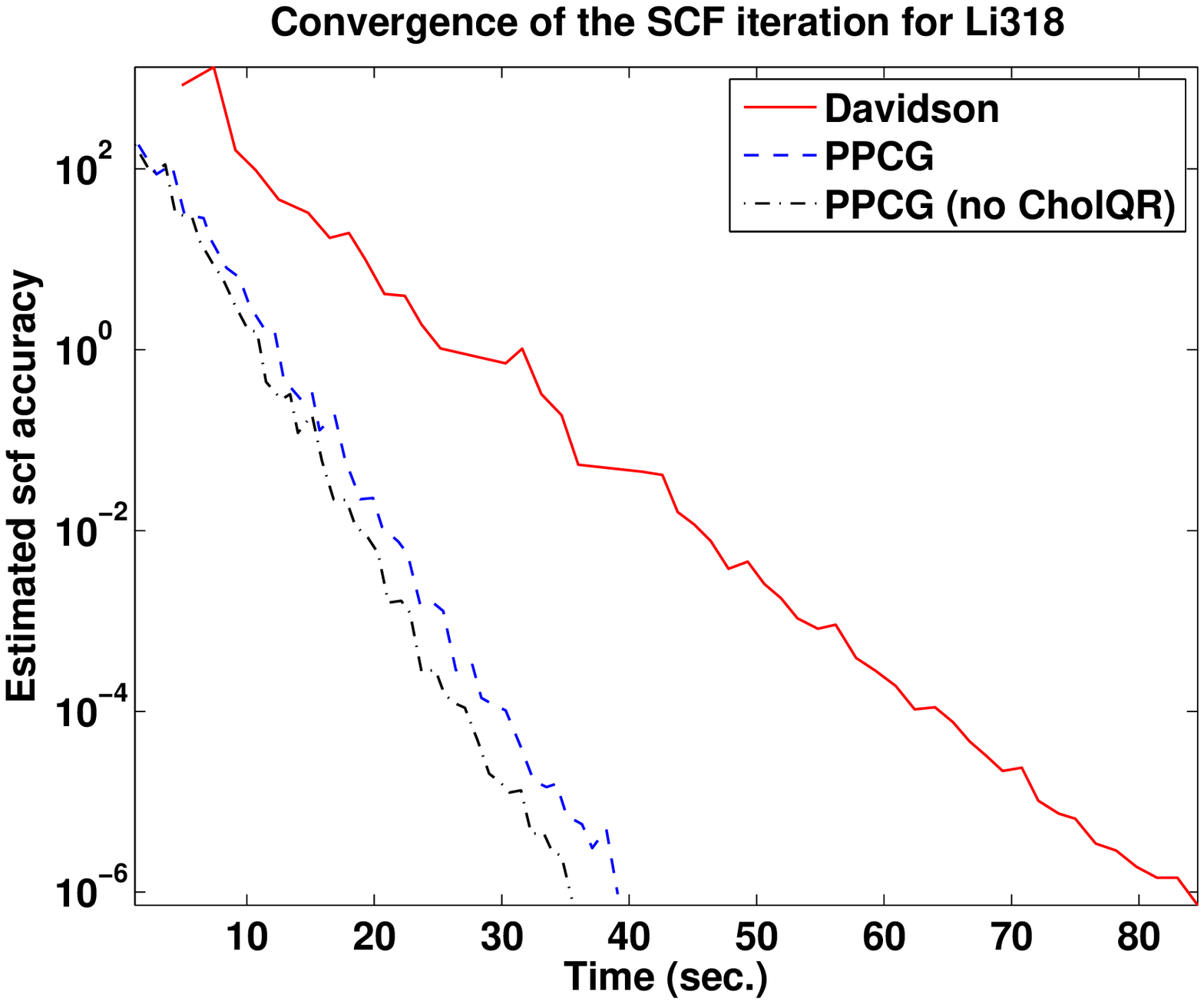}
\caption{Li318}\end{subfigure}\hfill
\begin{subfigure}{.33\textwidth}\includegraphics[width=\textwidth]{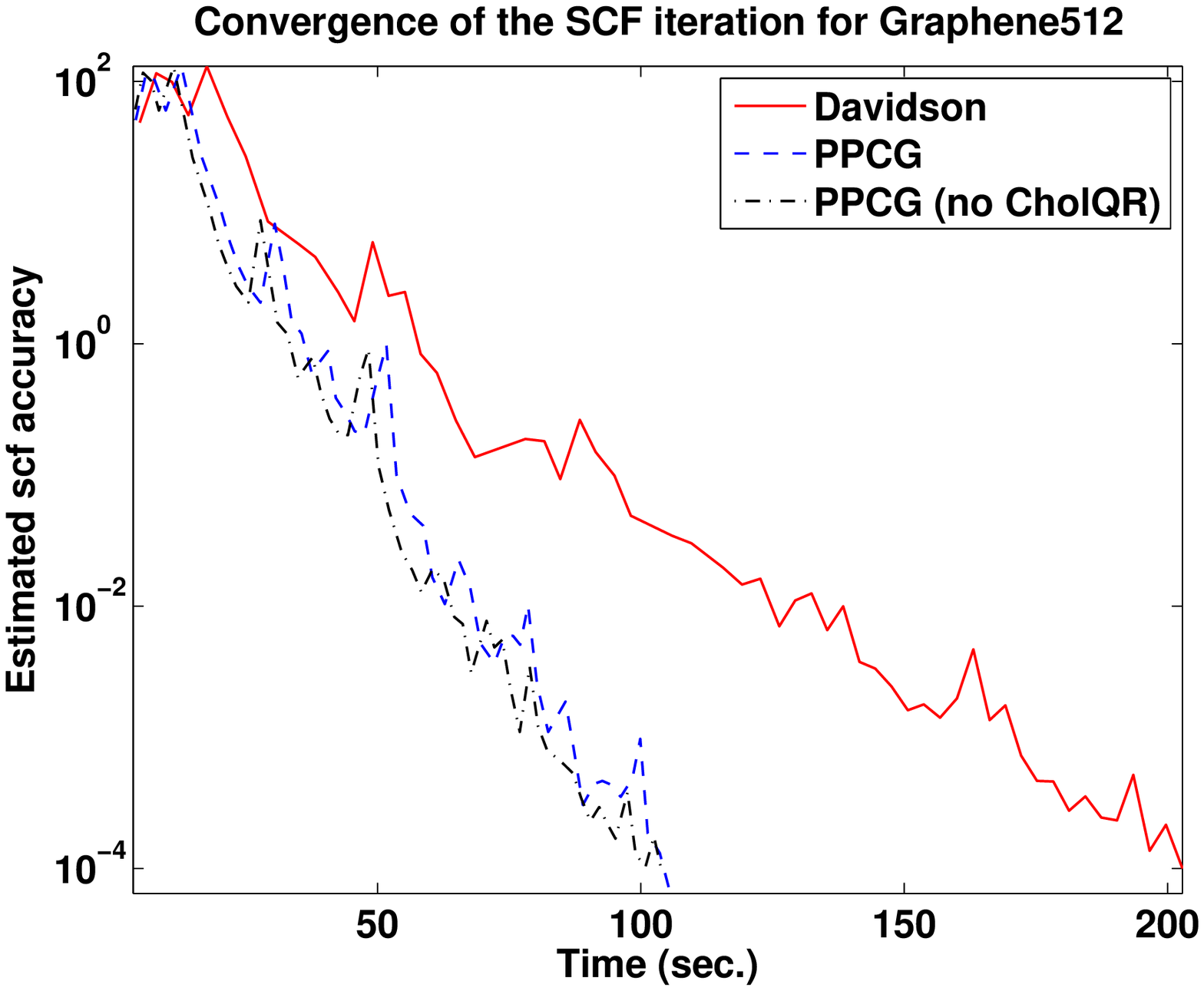}
\caption{Graphene512}\end{subfigure}\hfill
\begin{subfigure}{.33\textwidth}\includegraphics[width=\textwidth]{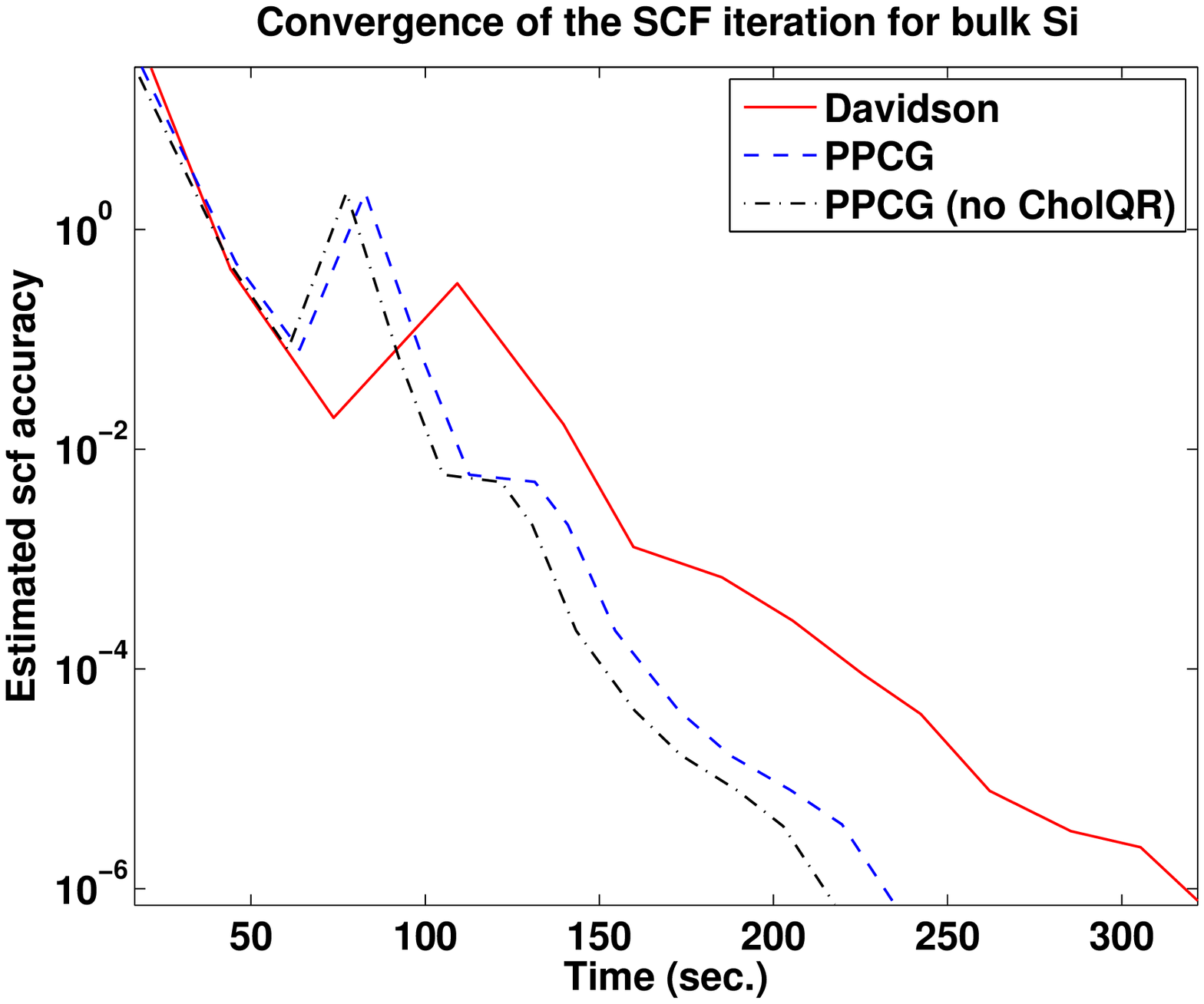}
\caption{bulk Si}\end{subfigure}
\caption{Convergence of the SCF iteration with Davidson and PPCG algorithms.}
\label{fig:scf_cv}
\end{figure}

The convergence curves corresponding to the SCF runs of Table~\ref{tab:bestperf_scf} are shown in Figure~\ref{fig:scf_cv}.   
We can clearly see that the PPCG based SCF iteration can be
nearly twice as fast as the Davidson based iteration. The figure also demonstrates the effects of skipping the Cholesky QR step, which further reduces the
PPCG run time.
It appears that, for the Li318 and Graphene512 examples, 
a slightly fewer number of SCF iterations is needed to reach convergence 
when PPCG is used to solve the linear eigenvalue problem in 
each step. 


%


In Figure~\ref{fig:pscale_scf}, we compare scalability of the SCF iteration based on the PPCG (without Cholesky QR) and Davidson algorithms.
We report the results for Li318 (left) and bulk Si (right). Similar to the case of the band structure calculations, both schemes scale
approximately up to the same number of cores for Li318, whereas scalability of PPCG is slightly better for the bulk Si example.


%
\begin{figure}[htbp]
\centering
\includegraphics[width=6cm]{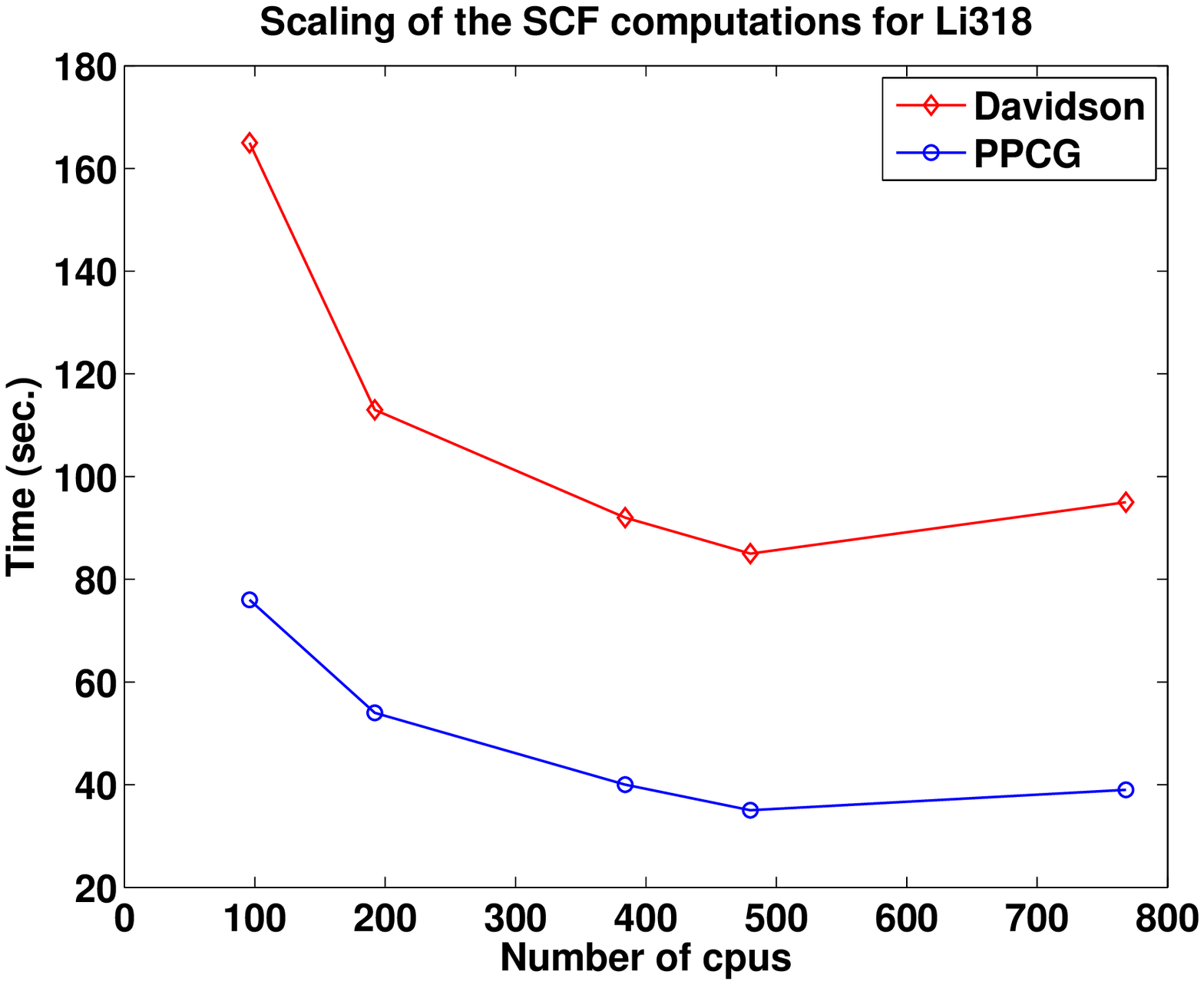}
\includegraphics[width=6cm]{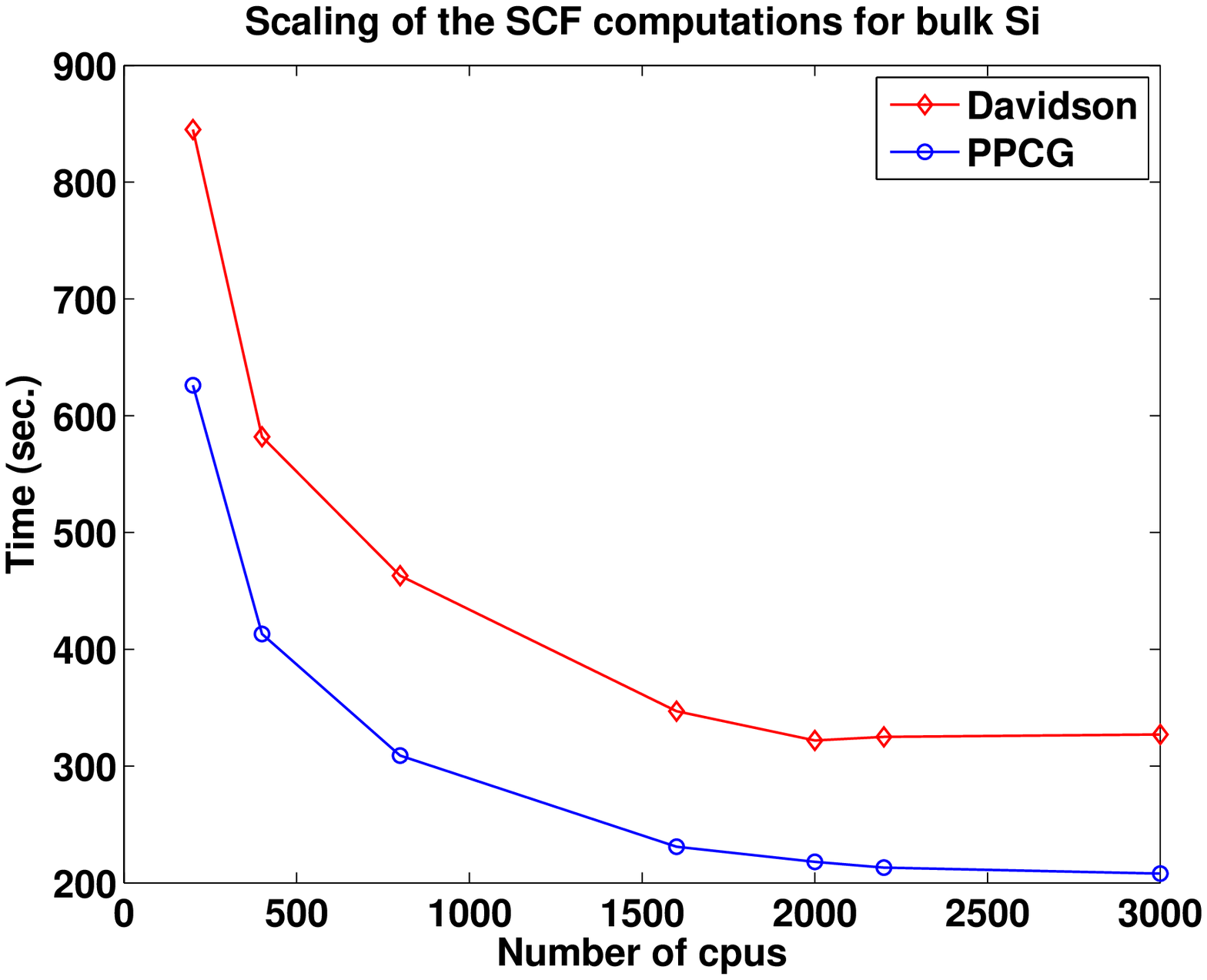}
\caption{Scaling of SCF iterations with the Davidson and PPCG algorithms
for Li318 (left) and bulk Si (right) systems.}
\label{fig:pscale_scf}
\end{figure}

\section{Conclusions}\label{sec:concl}
We presented a projected preconditioned conjugate gradient (PPCG) algorithm 
for computing an invariant subspace associated with the smallest eigenvalues 
of a large Hermitian matrix. 
The key feature of the new algorithm is that it performs
fewer Rayleigh-Ritz computations, which are often the bottleneck in 
iterative eigensolvers when the number of required eigenpairs is relatively 
large (e.g., over thousands).  
We discussed a number of practical issues that must be addressed in order to
implement the algorithm efficiently. 

We implemented the PPCG algorithm within the widely used Quantum Espresso 
(QE) planewave pseudopotential electronic structure software package. 
We demonstrated that PPCG is nearly two times faster than the existing 
state-of-the-art Davidson algorithm implemented in QE for a number of test problems.
We believe further performance gains can be achieved in PPCG relative 
to other algorithms if the multiplication
of $A$ with a block of vectors $X$ and the dense matrix multiplications
such as $X^{\ast}X$ are implemented in a scalable fashion.

\paragraph{Acknowledgments.}
The authors thank Dr.~Erik~Draeger at the Lawrence Livermore National Laboratory for insightful comments and discussions.

\appendix
\section{Detailed description of the PPCG algorithm}

In this appendix, we summarize the practical aspects related to implementation of PPCG, discussed in Section~\ref{sec:practical}, 
in Algorithm~\ref{alg:ppcg}. Note that if storage for $AX$, $AW$, and $AP$ is available, then the method can be implemented using one matrix--block multiplication 
and one block preconditioning operation per iteration. 
If only an invariant subspace is needed on 
output, then the last step of the algorithm can be omitted. 

\begin{algorithm}
\begin{small}
\begin{center}
  \begin{minipage}{5in}
\begin{tabular}{p{0.5in}p{4.5in}}
{\bf Input}:  &  \begin{minipage}[t]{4.0in}
The matrix $A$, a preconditioner $T$, a starting guess of the invariant subspace $X^{(0)} \in \IC^{n \times k}$ 
associated with the $k$ smallest eigenvalues of $A$, $X^{(0)*}X^{(0)} = I$,  parameter \textit{rr\_period} to control
the RR frequency, the splitting parameter \textit{sbsize}, and the number \textit{nbuf} of buffer vectors; 
                  \end{minipage} \\
{\bf Output}:  &  \begin{minipage}[t]{4.0in}
                 Approximate eigenvectors $X \in \IC^{n \times k}$ 
                 associated with the $k$ smallest eigenvalues $\Lambda$ of $A$;
                  \end{minipage}
\end{tabular}
\begin{algorithmic}[1]
\STATE $X \gets X^{(0)}$; $X_{\text{lock}} \gets \lt[ \ \rt]$; $P \gets \lt[ \ \rt]$; $\text{iter} \gets 1$;
\STATE Add \textit{nbuf} buffer vectors to $X$, $k \gets k + $\textit{nbuf}; $k_{\text{act}} \gets k$; 
\STATE Initialize index sets $J = \{1,\ldots, k\}$ and $J_{\text{lock}} \gets \lt[ \ \rt]$; 
\STATE Compute the initial subspace residual $W \gets AX - X(X^*AX)$; 
\STATE Use \textit{sbsize} to determine the number $s$ of subblocks\footnote{$s = k_{\text{act}}/$\textit{sbsize} if the remainder of the division is 0. Otherwise, $s = k_{\text{act}}/$\textit{sbsize}  $+ 1$.}.
\STATE Define the splitting $X = [X_1, \ \ldots, \ X_s]$ and $W = [W_1, \ \ldots, \ W_s]$.
\WHILE {convergence not reached
\footnote{Convergence should be monitored only for the $k$ leftmost vectors; 
in particular, the $nbuf$ buffer vectors should be excluded from computation of the norm of the block residual $AX - X(X^*AX)$.}
}
  \STATE $W \gets T W$;
  \STATE $W \gets (I - XX^*)W$ and $W \gets (I - X_{\text{lock}} X_{\text{lock}}^*)W$;
  \STATE $P \gets (I - XX^*)P$ and $P \gets (I - X_{\text{lock}} X_{\text{lock}}^*)P$; 
  \FOR {$j = 1, \ldots, s$} 
      \STATE $S \gets [X_j , W_j , P_j]$ ($P_j = 0$ if $P = [ \ ]$);
       \STATE Find eigenvectors $C = [C_X, \ C_W, \ C_P]^T$ ($C_P = 0$ if $P = \lt[ \ \rt]$) associated with the $k$ smallest eigenvalues $\Omega$ of~\eqref{eq:projev};   
      \STATE $P_j \gets W_j C_W + P_j C_P$;
      \STATE $X_{j} \gets X_j C_X + P_j$; 
  \ENDFOR
  \IF {$\text{mod}( \text{iter},\text{rr\_period} ) \neq 0$}
    \STATE Compute Cholesky factorization $X^* X = R^* R$;
    \STATE $X \gets XR\inv$;\footnote{Steps 18 and 19 can be periodically omitted to gain further efficiency.} 
    \STATE $W \gets AX - X(X^*AX)$; 
  \ELSE
    \STATE Set $S = [X, \ X_{\text{lock}}]$;
    \STATE Find eigenvectors $C$ associated with the $k$ smallest eigenvalues $\Omega$ of~\eqref{eq:projev};   
    \STATE $X \leftarrow S C$; $\Lambda \leftarrow \Omega$;
    \STATE $W \gets AX - X\Lambda$; 
    \STATE Use $W$ to determine column indices $J_{\text{lock}}$ of $X$ that correspond to converged eigenpairs;
           define the indices of active columns  $J_{\text{act}} \gets J \setminus J_{\text{lock}}$; 
    \STATE $X_{\text{lock}} \gets X(J_{\text{lock}})$; 
    \STATE $X \gets X(J_{\text{act}})$; $W \gets W(J_{\text{act}})$; $P \gets P(J_{\text{act}})$;
    \STATE Set $k_{\text{act}}$ to the number of active columns; 
    \STATE Compute the number $s$ of subblocks for splitting the active columns (similar to step 5); 
    \STATE Define the splitting of active columns $X = [X_1, \ \ldots, \ X_s]$, $W = [W_1, \ \ldots, \ W_s]$, and $P = [P_1, \ \ldots, \ P_s]$.
  \ENDIF
  \STATE $\text{iter} \gets \text{iter} + 1$;
\ENDWHILE
\STATE Perform steps 22-24 to obtain final eigenpair approximations ($X,\Lambda$).
\end{algorithmic}
\end{minipage}
\end{center}
\end{small}
  \caption{The PPCG algorithm (detailed description of Algorithm~\ref{alg:ppcg0})}
  \label{alg:ppcg}
\end{algorithm}

\pagebreak
\bibliography{eig}

\end{document}